\newtheorem{theorem}{Theorem}
\newtheorem{lemma}[theorem]{Lemma}
\newtheorem{corollary}[theorem]{Corollary}
\newtheorem{proposition}[theorem]{Proposition}
\tikzstyle{arc}=[->,shorten <=3pt, shorten >=3pt,
\tikzstyle{edge}=[shorten <=2pt, shorten >=2pt,
\tikzstyle{blackvertex}=[circle, fill=black,
\tikzstyle{vertex}=[circle, fill=white, draw,
\title{Strongly chordal digraphs and $\Gamma$-free matrices
\thanks{The authors gratefully acknowledge support from NSERC Canada}}
\author[1]{Pavol~Hell\thanks{pavol@sfu.ca}}
\author[2]{C\'esar~Hern\'andez-Cruz\thanks{cesar@cs.cinvestav.mx}}
\author[3]{Jing~Huang\thanks{huangj@uvic.ca}}
\author[4]{Jephian~C.-H.~Lin\thanks{chlin@math.nsysu.edu.tw}}
\affil[1]{School of Computing Science,
          Simon Fraser University}
\affil[2]{Departamento de Computaci\'on,
          Centro de Investigaci\'on y de
					Estudios Avanzados del IPN}
\affil[3]{Department of Mathematics and Statistics,
		      University of Victoria}
\affil[4]{Department of Applied Mathematics,
		      National Sun Yat-sen University}
\begin{document}
\date{}

\maketitle
\begin{abstract}

We define strongly chordal digraphs, which generalize strongly chordal graphs, and chordal 
bipartite graphs, and are included in the class of chordal digraphs. They correspond to square 
$0, 1$ matrices that admit a simultaneous row and column permutation avoiding the $\Gamma$
matrix. In general, it is not clear if these digraphs can be recognized in polynomial time, and we 
focus on symmetric digraphs (i.e., graphs with possible loops), tournaments with possible loops, 
and balanced digraphs. In each of these cases we give a polynomial-time recognition algorithm 
and a forbidden induced subgraph characterization. We also discuss an algorithm for minimum
general dominating set in strongly chordal graphs with possible loops, extending and unifying
similar algorithms for strongly chordal graphs and chordal bipartite graphs.
\end{abstract}

\section{Background and definitions}

A number of interesting graph classes have been extended to digraphs, including interval graphs
\cite{federDAM160}, chordal graphs \cite{haskinsSIAMJC2, meisterTCS463,hellDAM216}, split graphs
\cite{hellDAM216,lamarDM312}, and graphs of bounded treewidth \cite{hunterTCS399, johnsonJCTB82}. 
In most cases, there is more than one way to define such a generalization, and it is not obvious which 
one best captures the analogy to the undirected case. (In the undirected case there may be several
equivalent characterizations of the graphs in the class, and each may suggest a different generalization,
which are not equivalent in the context of digraphs.) It seems to be the case that often the most successful
generalizations use the ordering characterization of the undirected concept, or, equivalently, its 
characterization by forbidden submatrices of the adjacency matrix.

Consider first the undirected notion of an interval graph. Since every interval intersects itself, we will assume 
each vertex has a loop. Then interval graphs are known to have the following ordering characterization 
\cite{federDAM160}. (There are other ordering characterizations of interval graphs, but this one turns out to be
most useful; however, it only applies if every vertex is considered adjacent to itself.) A graph $G$ is an interval 
graph if and only if its vertices can be ordered as $v_1, v_2, \dots, v_n$ so that if $i < j$ and $k < \ell$, not
necessarily all distinct, then for $v_iv_{\ell} \in E(G), v_jv_k \in E(G)$ we also have $v_jv_{\ell} \in E(G)$. 
Equivalently, $G$ is an interval graph if and only if the rows and columns of its adjacency matrix can be 
simultaneously permuted to avoid a submatrix of the form
$\left[ \begin{matrix}
  \ast & 1 \\
  1 & 0
\end{matrix} \right]$
where $\ast$ can be either $0$ or $1$.

In \cite{federDAM160}, the authors analogously define a digraph analogue of interval graphs as follows. 
A digraph with a loop 
at every vertex is an {\em adjusted interval digraph} if the rows and columns of its adjacency matrix can be 
simultaneously permuted to avoid a submatrix of the form
$\left[ \begin{matrix}
  \ast & 1 \\
  1 & 0
\end{matrix} \right].$

It turns out that these digraphs have a natural geometric representation, a forbidden structure characterization, 
and other desirable properties analogous to interval graphs \cite{federDAM160}. (By contrast, the earlier class of 
{\em interval digraphs} \cite{dasJGT13}, based on a simple geometric analogy, lacks many of these nice properties.)

To simplify the language, we will say that a vertex is {\em reflexive} if it has a loop and {\em irreflexive} if it does not. 
A digraph is {\em reflexive} if every vertex is reflexive and is {\em irreflexive} if every vertex is irreflexive. Thus the 
diagonal entries of the adjacency matrix of a reflexive digraph are all $1$ and of an irreflexive digraph are all $0$.
An arc $uv$ in a digraph is {\em symmetric} if $vu$ is also an arc. A digraph is {\em symmetric} if every arc is 
symmetric. The adjacency matrix of a symmetric digraph is symmetric. A symmetric digraph may be viewed as 
a graph with possible loops. In the figures, we will depict reflexive vertices in {\em black} and irreflexive vertices 
in {\em white}.

For graph classes that are characterized as intersection graphs (typically chordal graphs and their subclasses
such as strongly chordal graphs and interval graphs), it is most natural to restrict attention to reflexive graphs
(and digraphs), as is noted above for interval graphs. Nevertheless, it is possible to obtain useful generalizations
for digraphs that are neither reflexive nor irreflexive. This is done, for example, in \cite{arash,ross}, where general
digraphs (that have some vertices with loops and others without) avoiding
$\left[ \begin{matrix}
  * & 1 \\
  1 & 0
\end{matrix} \right]$
are investigated and found a useful unification of interval graphs, adjusted interval digraphs, two-dimensional 
orthogonal ray graphs (alias interval containment digraphs), and complements of threshold tolerance graphs.
Another situation where it is fruitful to admit some vertices with loops and others without loops is the subject
of the next section; the class of graphs investigated there unifies reflexive strongly chordal graphs and irreflexive
chordal bigraphs, and introduces a whole new class of well structured graphs.

In this paper we consider the digraph generalization of the undirected notion of strong chordality. A chordal  
graph $G$ can be defined by the existence of a {\em perfect elimination ordering}, also known as a {\em simplicial ordering}, 
$v_1, v_2, \dots, v_n$ of its vertices so that if $i < j, i < k$ and $v_iv_j \in E(G), v_iv_k \in E(G)$, then we must also have 
$v_jv_k \in E(G)$. They are also characterized as those graphs that have no induced cycle of length greater
than three, or those graphs that are intersection graphs of subtrees of a tree \cite{golumbic}. As noted above, we 
consider chordal graphs to be reflexive, i.e., the adjacency matrix of a chordal graph has $1$'s on its main diagonal. 
Then a perfect elimination ordering corresponds to a simultaneous permutation of the rows and columns of the adjacency 
matrix that avoids as a principal submatrix the so-called $\Gamma$
{\em matrix}
$\left[ \begin{matrix}
  1 & 1 \\
  1 & 0
\end{matrix} \right]$.
Such a submatrix is called a {\em principal submatrix} if the upper left $1$ lies on the main diagonal.

Chordal digraphs were first defined in \cite{haskinsSIAMJC2}, and further studied in \cite{meisterTCS463}. A reflexive
digraph $D$ is a {\em chordal digraph} if the rows and columns of its adjacency matrix can be simultaneously permutated
to avoid $\Gamma$ as a principal submatrix. These digraphs can be recognized in polynomial time \cite{haskinsSIAMJC2}
and structural characterizations are known for several special cases, including oriented graphs and semi-complete
digraphs \cite{meisterTCS463}. A more restrictive notion of {\em strict chordal digraphs} from \cite{hellDAM216} admits a
general forbidden induced subgraph characterization and leads to a nice notion of strict split digraphs \cite{hellDAM216}. 

In the context of undirected graphs, {\em strongly chordal graphs} \cite{farberDM43} are defined as the subclass of those
chordal graphs for which the rows and columns of their adjacency matrix can be simultaneously permutated to avoid 
$\Gamma$ as {\em any} submatrix (not just principal submatrix). Strongly chordal graphs admit elegant forbidden structure
characterizations \cite{farberDM43,cn}, efficient recognition algorithms \cite{lubiwSIAMJC16}, and lead to efficient algorithms 
for some problems that are intractable for chordal graphs \cite{farberDM43}.

Permuting rows and columns of a $0, 1$ matrix $M$ to avoid $\Gamma$ as a submatrix has been much studied 
\cite{ansteeJA5,hoffmanSIAMJADM6, lubiwMSc,lubiwSIAMJC16}. A {\em $\Gamma$-free ordering} of $M$ is a 
matrix obtained from $M$ by {\em independently} permuting its rows and columns, to avoid $\Gamma$ as a submatrix.
If the constraint matrix of a linear program is presented in a $\Gamma$-free ordering, then it can be solved by a
greedy algorithm \cite{ansteeJA5,hoffmanSIAMJADM6}. A {\em cycle matrix} is a square $0, 1$ matrix of size
at least $3$, with exactly two $1$'s in each row and each column. A matrix $M$ is {\em totally balanced}, if it admits 
no cycle matrix as a submatrix.  A matrix $M$ admits a $\Gamma$-free ordering if and only if it is totally balanced  
\cite{hoffmanSIAMJADM6}.  There are efficient algorithms to decide if a matrix is totally balanced 
\cite{lubiwMSc,lubiwSIAMJC16}.

For a square matrix $M$, a {\em symmetric $\Gamma$-free ordering} is a matrix obtained from $M$ by {\em 
simultaneously} permuting its rows and columns, to avoid $\Gamma$ as a submatrix. A reflexive graph $G$ is
strongly chordal if and only if its adjacency matrix $M(G)$ has a symmetric $\Gamma$-free ordering \cite{farberDM43}.
The algorithm in \cite{lubiwSIAMJC16} finds a symmetric $\Gamma$-free ordering of a symmetric matrix $M$ 
(or decides that one doesn't exist) provided $M$ has $1$'s on the main diagonal. In particular, a symmetric
matrix $M$ with $1$'s on the main diagonal admits a symmetric $\Gamma$-free ordering if and only 
if it is totally balanced \cite{farberDM43,lubiwSIAMJC16}.

For a bigraph $G$ (a bipartite graph with a fixed bipartition into red and blue vertices), we consider the {\em 
bi-adjacency matrix} $N(G)$, with rows indexed by the red vertices and columns indexed by the blue vertices,
and $N(i, j)=1$ if and only if the $i$-th red vertex is adjacent to the $j$-th blue vertex. Note that $N$ is in general
not a square matrix. A {\em chordal bigraph} $G$ is a bigraph whose bi-adjacency matrix has a $\Gamma$-free 
ordering \cite{hmp}.

We say $D$ is {\em a strongly chordal digraph} if its adjacency matrix $M(D)$ admits a symmetric $\Gamma$-free 
ordering. It follows that a strongly chordal graph is precisely (the underlying graph of) a strongly chordal digraph 
that is symmetric and reflexive. It also follows that strongly chordal digraphs are chordal digraphs as defined in 
\cite{haskinsSIAMJC2,meisterTCS463}. Chordal bigraphs can also be seen as special strongly chordal digraphs, 
because the adjacency matrix $M(G)$ of a bigraph $G$ (viewed as a graph) has a symmetric $\Gamma$-free 
ordering if and only if its bi-adjacency matrix $N(G)$ has a $\Gamma$-free ordering. Thus strongly chordal 
digraphs can be seen as generalizing strongly chordal graphs, and chordal bigraphs, and be included in the
class of chordal digraphs.

The problem of recognizing strongly chordal digraphs is equivalent to the problem of deciding if a given 
square $0, 1$ matrix has a symmetric $\Gamma$-free ordering. This seems to be a difficult problem;
as we show below, it is no longer equivalent with being totally balanced, or any of the other polynomial
conditions that applied for symmetric matrices with $1$'s on the main diagonal.

We shall focus on certain particular classes of digraphs. The first class is the class of symmetric digraphs, i.e., 
graphs with possible loops. This is a non-trivial extension of the two original concepts of reflexive strongly chordal
graphs and irreflexive chordal bigraphs. While some of the tools used in the classical concept do not apply, 
we still recover a reasonable theory and give a full characterization of these digraphs by forbidden subgraphs. 
We also consider the special case of tournaments with possible loops; here we prove that very few of these
tournaments are strongly chordal, and we can actually describe all strongly chordal cases. We also consider
strongly chordal balanced digraphs, which are a different generalization of chordal bigraphs, and include all
oriented trees.

As an example potential application we define a general domination number, which specializes to the usual
domination number in case of reflexive graphs, and to the total domination number in the case of irreflexive
graphs. We give a linear time algorithm to compute the general domination number for strongly chordal
graphs with possible loops, unifying and extending the algorithms given in \cite{damdom,domdom}.

\section{Graphs with possible loops}

In this section we focus on digraphs that are symmetric, and view them as graphs with possible loops.
This involves treating each symmetric pair of arcs $xy, yx$ as one undirected edge $xy$. (Note that the
adjacency matrix of this object is the same whether it is viewed as a symmetric digraph or a graph with 
possible loops.) We first translate the above definitions into a language more consistent with \cite{farberDM43},
where the case of reflexive strongly chordal graphs was first treated.

Let $G$ be a graph with possible loops. Then $G$ is strongly chordal, i.e., its adjacency matrix $M(G)$ has a 
symmetric $\Gamma$-free ordering, if and only if the vertices of $G$ can be linearly ordered as $v_1, v_2, \dots, v_n$ 
so that if $i < j, k < \ell$ and $v_iv_k \in E(G), v_iv_{\ell} \in E(G), v_jv_k \in E(G)$ (where $i, j, k, \ell$ are not 
necessarily all distinct), then we also have $v_jv_{\ell} \in E(G)$. We call such an ordering a {\em strong ordering} 
of $G$. A vertex $v \in V(G)$ is {\em simple} if its  neighbours have their neighbourhoods linearly ordered by inclusion, 
i.e., if for any $x, y \in N(v)$, we have $N(x) \subseteq N(y)$ or $N(x) \supseteq N(y)$. A {\em simple ordering} of 
$G$ is a linear ordering $v_1, v_2, \dots, v_n$ of $V(G)$ such that each $v_i$ is simple in the subgraph induced 
by the set $\{v_i, v_{i+1}, \dots, v_n\}$. It is easy to see that a strong ordering is a simple ordering. We will prove
that the converse also holds. These notions and facts are analogous to the usual theory for reflexive graphs 
\cite{farberDM43}, except for us the neighbourhood of a vertex may or may not include that vertex, depending 
on whether the vertex is reflexive or not, respectively. A reflexive graph is strongly chordal, i.e., has a strong ordering,
if and only if it has a simple ordering \cite{farberDM43}. A reflexive graph is strongly chordal if and only if it does
not contain an induced cycle of length greater than $3$ or an induced trampoline \cite{farberDM43}. A {\em
trampoline} is a complete graph on $x_1, x_2, \dots, x_k, k \geq 3,$ with vertices $y_1, y_2, \dots, y_k$ each 
of degree $2$, where each $y_i$ is adjacent to $x_{i-1}$ and $x_{i+1}$ (subscripts modulo $k$).

It is also useful to interpret these definitions on the class of chordal bigraphs. Recall that bigraphs are bipartite, 
and hence automatically irreflexive. Also recall, that to see them as a special case of strongly chordal digraphs
(and a special case of strongly chordal graphs with possible loops) we view their adjacency matrix as first listing 
the red vertices and then the blue vertices. (This way independent permutations of each set of coloured vertices
correspond to simultaneous permutations of the vertices.) A strong ordering of $G$ corresponds to an ordering 
of the red vertices and an ordering of the blue vertices so that for red $v_i, v_j$ and blue $v_k, v_{\ell}$ we have 
$i < j, k < \ell$, and $v_iv_k \in E(G)$, $v_iv_{\ell} \in E(G),$ $v_jv_k \in E(G)$ imply $v_jv_{\ell} \in E(G)$. A bigraph 
has a strong ordering if and only if it has a simple ordering \cite{hmp}. A bigraph is chordal if and only if it does not 
contain an induced even cycle of length greater than $4$ \cite{golumbicJGT2}.

We prove the following extension of a result of Farber \cite{farberDM43}, who proved it for reflexive graphs. 
We will show in later sections that such results do not hold for digraphs, or even tournaments.

\begin{theorem}\label{sym}
Let $G$ be a graph with possible loops. The following statements are equivalent:
\begin{enumerate}
\item $G$ is strongly chordal;
\item $M(G)$ is totally balanced;
\item every induced subgraph of $G$ has a simple vertex;
\item $G$ has a simple ordering.
\qed
\end{enumerate}
\end{theorem}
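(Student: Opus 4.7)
My plan is to establish the four equivalences by closing the cycle $(1)\Rightarrow(2)\Rightarrow(3)\Rightarrow(4)\Rightarrow(1)$, obtaining $(3)\Leftrightarrow(4)$ along the way.

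The implication $(1)\Rightarrow(2)$ is immediate: a symmetric $\Gamma$-free ordering is in particular a $\Gamma$-free ordering of $M(G)$ (with the row and column permutations taken equal), so by the Hoffman--Kolen--Sakarovitch theorem $M(G)$ is totally balanced. For $(2)\Rightarrow(3)$ I pass to the bipartite companion $B(G)$ whose vertex set consists of two copies $V_L,V_R$ of $V(G)$, with an edge $u_Lv_R$ whenever $uv\in E(G)$ (loops included). The bi-adjacency matrix of $B(G)$ is exactly $M(G)$, so $M(G)$ is totally balanced if and only if $B(G)$ is a chordal bigraph. By \cite{hmp}, $B(G)$ then admits a simple vertex $u_L$ (or $u_R$), and the defining nested-neighbourhood condition in $B(G)$ translates verbatim, under the correspondence $w_L\leftrightarrow w_R\leftrightarrow w$, into the statement that $u$ is simple in $G$. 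Since any principal submatrix of a totally balanced matrix is totally balanced, this argument furnishes a simple vertex in every induced subgraph.

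The equivalence $(3)\Leftrightarrow(4)$ is routine: restricting a simple ordering of $G$ to any subset $S$ of vertices puts its first vertex into a position where it is simple in $G[S]$ (since restriction preserves nesting of neighbourhoods), while conversely one builds a simple ordering by iteratively peeling off a simple vertex.

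The main obstacle is $(4)\Rightarrow(1)$. Here I proceed by induction on $|V(G)|$. The delicate point, shown by small examples, is that not every simple vertex can be placed first in a strong ordering: the neighbourhoods of its neighbours may be nested in the wrong direction relative to the rest of the ordering. I therefore select a simple vertex $v$ of $G$ whose (closed) neighbourhood is inclusion-minimal among all simple vertices, apply the inductive hypothesis to $G-v$ to obtain a strong ordering of $G-v$, and then rearrange the vertices of $N(v)$ within that ordering so that their $G-v$-neighbourhoods appear in nondecreasing order (possible because $v$ simple forces the neighbourhoods of its neighbours to be linearly ordered by inclusion). After this rearrangement, prepending $v$ avoids any $\Gamma$-submatrix involving row or column $v$: a putative $\Gamma$ with columns $c_1<c_2$ lying in $N(v)$ would require some $r$ with $r\in N(c_1)\setminus N(c_2)$, contradicting $N(c_1)\subseteq N(c_2)$. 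The genuinely delicate step, and the main technical work of the proof, is verifying that the local rearrangement of $N(v)$ preserves the $\Gamma$-free property on $V(G)\setminus\{v\}$; for this one exploits the minimality of $N(v)$ together with the inductive hypothesis applied to the induced subgraphs obtained by restricting to nested-neighbourhood blocks.
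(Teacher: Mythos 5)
Your implications $(1)\Rightarrow(2)$, $(2)\Rightarrow(3)$ and $(3)\Leftrightarrow(4)$ are correct and essentially match the paper (the paper routes $(1)\Rightarrow(2)$ through the bigraph $B(G)$ rather than citing total balance of $\Gamma$-free matrices directly, but both are fine). The problem is $(4)\Rightarrow(1)$, which is the heart of the theorem, and there your argument has a genuine gap. You take a strong ordering of $G-v$ supplied by induction, permute the positions occupied by $N(v)$ so that their neighbourhoods are nested in nondecreasing order, and prepend $v$. Your verification only covers $\Gamma$-submatrices that use the row or column of $v$; it does not show that the rearrangement leaves the ordering of $G-v$ strong. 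That is a real danger: moving a vertex $x\in N(v)$ past a vertex $z\notin N(v)$ changes the relative order of the pair $(x,z)$ in both rows and columns, and can create a $\Gamma$ entirely inside $G-v$ that was absent before. You acknowledge this is ``the main technical work'' and then do not carry it out; the appeal to ``minimality of $N(v)$'' and to ``nested-neighbourhood blocks'' is not an argument, and it is not clear the strategy can be completed, because a strong ordering of $G-v$ chosen without reference to $v$ may simply be incompatible with any placement of $v$ at the front.

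The paper avoids this by not doing induction on a single deleted vertex at all. It builds the ordering greedily from the front and, crucially, maintains an accumulated partial order $\preceq_i$ on the remaining vertices that records every strict neighbourhood inclusion $N_j(x)\subset N_j(y)$ observed at \emph{any} earlier stage $j\le i$; the next vertex is chosen to be simple \emph{and} minimal in this accumulated poset. This memory is exactly what makes the final verification work: in the check that the ordering is strong, the bad case $N_i(v_\ell)\subset N_i(v_k)$ is killed because it would force $v_\ell\preceq_k v_k$, contradicting the minimality of $v_k$ at the later stage $k$. Your selection rule (inclusion-minimal neighbourhood among simple vertices of the current graph) has no analogue of this cross-stage constraint, so even if you replaced the rearrangement step by a fresh choice at each stage, the final verification would not go through as written. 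To repair your proof you would either have to prove the preservation claim for the rearrangement (which I do not believe is true in general) or switch to a construction that, like the paper's, constrains later choices by inclusions witnessed earlier.
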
 

\begin{proof} If $G$ is strongly chordal, it has a strong, and hence a simple ordering. Consider the bigraph $B(G)$
obtained from $G$ by replacing each vertex $v$ by two vertices $v_1, v_2$, and each edge $vw$ by the two edges 
$v_1w_2, w_1,v_2$. It is easy to see that $B(G)$ also has a simple ordering, whence the bi-adjacency matrix $N(G)$
is totally balanced. Since $M(G)=N(B(G))$, this implies that $M(G)$ is totally balanced. Thus $1$ implies $2$.

To show $2$ implies $3$, suppose that $M(G)$ is totally balanced. Since every induced subgraph of a
strongly chordal graph is obviously strongly chordal, it suffices to show that $G$ has a simple vertex. Since
$M(G)=N(B(G))$ is totally balanced, $B(G)$ is a chordal bigraph and hence has a simple vertex $v_1$ or $v_2$ 
for some vertex $v$ of $G$, whence $v$ is a simple vertex in $G$.

We will now show that $3$ implies $1$. So assume that every induced subgraph of $G$ has a simple vertex.
We show how to obtain a strong ordering $v_1, v_2, \dots,v_n$ of $G$. The selection of $v_i$ for each $i \geq 1$
will be guided by a partial order $\preceq_i$ defined on $V_i = V(G) \setminus \{v_1, v_2, \dots, v_{i-1}\}$.
Initially, $V_0 = V(G)$ and $\preceq_0$ on $V_0$ consists of the reflexive pairs only, that is, $x \preceq_0 y$ if and
only if $x = y$ for all $x, y \in V_0$. For each $i \geq 1$, let $\preceq_i$ on $V_i$ be defined by $x \preceq_i y$ if and 
only if $x \preceq_{i-1} y$ or $N_i(x) \subset N_i(y)$ where $N_i(x)$ and $N_i(y)$ are the neighbourhoods
of $x$ and $y$ in the subgraph of $G$ induced by $V_i$. Equivalently, for each $i \geq 1$, $x \preceq_i y$ if and
only if $x = y$ or $x \neq y$ and $N_j(x) \subset N_j(y)$ for some $j \leq i$. We will show that $\preceq_i$ is a
partial order for each $i \geq 0$. The vertex $v_i$ for each $i \geq 1$ is selected to be a simple vertex that is
also a minimal element in the poset $(V_i,\preceq_i)$. We will also show that such a vertex $v_i$ always exists.

First we prove that $\preceq_i$ is a partial order on $V_i$ for each $i \geq 0$ by induction. Clearly, $\preceq_0$ is a
partial order on $V_0$. Assume that $i \geq 1$ and $\preceq_j$ is a partial order for each $j < i$. The reflexivity of 
$\preceq_i$ follows from the fact that $\preceq_i$ contains $\preceq_0$, which is reflexive. Suppose that $x \preceq_i y$ 
where $x \neq y$. Then there exists $j$ with $j \leq i$ such that $N_j(x) \subset N_j(y)$. Thus $N_j(y) \not\subset N_j(x)$ 
for all $j \leq i$, i.e., $y \not\preceq_i x$. Hence $\preceq_i$ is antisymmetric. For the transitivity, suppose that 
$x \preceq_i y \preceq_i z$. Then there exist $j, k$ with $j \leq i$ and $k \leq i$ such that $N_j(x) \subset N_j(y)$ and 
$N_k(y) \subset N_k(z)$. Let $\ell = \mbox{max}\{j,k\}$. Then $\ell \leq i$ and $N_\ell(x) \subset N_\ell(z)$, which means 
that $x \preceq_i z$. Therefore $\preceq_i$ is a partial order on $V_i$ for each $i \geq 0$.

Let $u$ be a simple vertex in the subgraph of $G$ induced by $V_i$. Such a vertex exists because every
induced subgraph of $G$ has a simple vertex. We prove that if $v \preceq_i u$ then $v$ is also a simple vertex.
So suppose that $v \preceq_i u$. Then there exists a $j$ with $j \leq i$ such that $N_j(v) \subset N_j(u)$. Hence
we must have $N_i(v) \subseteq N_i(u)$. Since $u$ is simple, $v$ is also simple. It follows that the subgraph
of $G$ induced by $V_i$ has a simple vertex that is also a minimal element in the poset $(V_i,\preceq_i)$ for each $i
\geq 1$. Therefore we obtain an ordering $v_1, v_2, \dots, v_n$. It suffices to show that the ordering is a strong
ordering of $G$.

Suppose that $i < j$, $k < \ell$, $v_i \in N(v_k), v_i \in N(v_\ell)$ and $v_j \in N(v_k)$. We show that $v_j \in N(v_\ell)$. 
By symmetry, we may assume that $i \leq k$. Thus $v_i, v_j, v_k, v_\ell \in V_i$. Since $v_i$ is simple, either 
$N_i(v_k) \subseteq N_i(v_\ell)$ or $N_i (v_\ell) \subset N_i(v_k)$. In the latter case, $v_\ell \preceq_i v_k$, and hence 
$v_\ell \preceq_k v_k$. However, $v_k$ is minimal in $(V_k,\preceq_k)$ by the choice of $v_k$, a contradiction. 
Therefore $N_i(v_k) \subseteq N_i(v_\ell)$, which implies $v_j \in N(v_\ell)$. This shows that the ordering 
$v_1, v_2, \dots, v_n$ is a strong ordering of $G$.

Finally we note that statements 3 and 4 are obviously equivalent.
\end{proof}

Let $W:\ v_0v_1 \dots v_k$ be a walk of length $k$ in a graph $G$ with possible loops. If $v_0 = v_k$, then
$W$ is called a {\em closed} walk. A {\em subwalk} of a walk $W$ is a walk $v_iv_{i+1} \dots v_j$ for some 
$0 \leq i \leq j \leq k$. A subwalk of $W$ is {\em proper} if the length of the subwalk is less than the length of 
$W$. A {\em strong chord of a walk} $W$ is an edge $v_iv_j$ (possibly a loop if $v_i = v_j$) such that $j-i$ is 
odd but not equal to $1$ or $-1$. A {\em strong chord of a closed walk} $W$ is defined similarly, except the 
expression $j-i$ is evaluated modulo $k$. 
Note that in a closed walk $W : v_0, v_1, \dots, v_k=v_0$, the last edge $v_{k-1}v_0$ is not an strong chord for any 
$k$ because $0$ is $(k-1)+1$ modulo $k$.

\begin{corollary}
\label{cor:chss-wlk}
A graph $G$ with possible loops is strongly chordal if and only every even closed walk  of length at least $6$ 
has a strong chord.
\end{corollary}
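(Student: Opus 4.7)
The plan is to prove both directions via the equivalence from Theorem~\ref{sym}, working through the doubled bigraph $B(G)$ (whose bi-adjacency matrix equals $M(G)$): recall that $G$ is strongly chordal iff $B(G)$ contains no induced cycle of length at least $6$. The bridge between closed walks in $G$ and induced cycles in $B(G)$ is the walk $W^\sharp$ obtained from an even closed walk $W : u_0 u_1 \cdots u_{2k} = u_0$ in $G$ by sending even-position vertices into $V_1$ and odd-position vertices into $V_2$, namely $W^\sharp : (u_0)_1 (u_1)_2 (u_2)_1 \cdots (u_{2k-1})_2 (u_0)_1$ in $B(G)$.

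For the direction ``strongly chordal implies every such walk has a strong chord'', I would argue by contrapositive, taking a counterexample $W$ of minimum length $2k \geq 6$ with no strong chord, and showing $W^\sharp$ is an induced cycle of length $2k$ in $B(G)$. The first observation is that any non-walk edge of $B(G)$ between vertices of $W^\sharp$ has the form $(u_i)_1 (u_j)_2$ with $i$ even, $j$ odd, and positions not cyclically consecutive; such an edge is precisely an edge $u_i u_j \in E(G)$ whose position difference is odd and $\not\equiv \pm 1 \pmod{2k}$, i.e., a strong chord of $W$. The ``no strong chord'' hypothesis therefore eliminates all such edges, and the remaining task, which is the main obstacle, is to rule out any same-parity repetition $u_i = u_j$ with $j - i$ even.

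I would handle this by cases on the cyclic distance $d$ of such a repetition, which by cyclic symmetry may be taken to satisfy $2 \leq d \leq k$ with $d$ even. If $d = 2$, then $u_i = u_{i+2}$ and the walk edge $u_{i-1} u_i$ give an edge $u_{i-1} u_{i+2} \in E(G)$ whose positions differ by $3$, which for $2k \geq 6$ is odd and $\not\equiv \pm 1 \pmod{2k}$, hence a strong chord of $W$, contradiction. The case $d = 4$ is analogous, using $u_{i+1} u_{i+4}$ with position difference $3$. If $d \geq 6$, the subwalk $u_i u_{i+1} \cdots u_j$ is an even closed walk of length $d$ with $6 \leq d < 2k$, and any strong chord of this subwalk has relative position difference in $\{3, 5, \dots, d-3\} \subseteq \{3, 5, \dots, 2k-3\}$, hence remains a strong chord of $W$; thus the subwalk is a strictly shorter counterexample, contradicting minimality. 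Different-parity repetitions cause no issue because $(u_i)_1$ and $(u_j)_2$ lie in disjoint parts of $B(G)$. So $W^\sharp$ is an induced cycle of length $\geq 6$ in $B(G)$, contradicting $G$ being strongly chordal via Theorem~\ref{sym}.

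For the converse direction, suppose $G$ is not strongly chordal. By Theorem~\ref{sym}, $B(G)$ has an induced cycle $C$ of length $2k \geq 6$, which I write as $(r_1)_1 (c_1)_2 (r_2)_1 (c_2)_2 \cdots (r_k)_1 (c_k)_2 (r_1)_1$. This unfolds to the closed walk $W : r_1 c_1 r_2 c_2 \cdots r_k c_k r_1$ of length $2k$ in $G$. A strong chord of $W$ must connect some $r_a$ (even position) to some $c_b$ (odd position) with $b \not\equiv a, a - 1 \pmod{k}$, because edges $r_a r_{a'}$ and $c_b c_{b'}$ have even position differences and cannot be strong chords; but such an edge $r_a c_b \in E(G)$ would produce a non-cycle edge $(r_a)_1 (c_b)_2$ inside $C$, contradicting that $C$ is induced. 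Hence $W$ is an even closed walk of length $\geq 6$ with no strong chord, completing the contrapositive.
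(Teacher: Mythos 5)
Your proof is correct, but it takes a genuinely different route from the paper's. The paper's proof of Corollary~\ref{cor:chss-wlk} is a two-line citation: it combines Theorem~\ref{sym} (strong chordality is equivalent to $M(G)$ being totally balanced) with the known fact from \cite{ansteeJA5} that a matrix $M(G)$ is totally balanced if and only if every even closed walk of length at least $6$ in $G$ has a strong chord. What you have done, in effect, is reprove that cited fact in the symmetric setting: you route everything through the doubled bigraph $B(G)$, using $M(G)=N(B(G))$ and the characterization of chordal bigraphs by induced cycles of length at least $6$, and you establish a direct correspondence between chordless even closed walks in $G$ and long induced cycles in $B(G)$. The technically substantive part of your argument --- absent from the paper because it is subsumed by the citation --- is the treatment of vertex repetitions in a minimum-length chordless walk: the same-parity repetitions are correctly eliminated by exhibiting a strong chord at distance $3$ when the repetition gap is $2$ or $4$, and by passing to a shorter closed subwalk when the gap is at least $6$ (where your check that a strong chord of the subwalk lifts to a strong chord of $W$ is the key point, and it holds because the relevant position differences lie in $\{3,\dots,d-3\}$ with $d\le k$); different-parity repetitions and loops are handled correctly because any resulting edge of $B(G)$ between non-consecutive positions of $W^\sharp$ is itself a strong chord of $W$. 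The converse direction, unfolding an induced cycle of $B(G)$ into a chordless walk of $G$, is also sound. Your approach buys self-containedness (no appeal to \cite{ansteeJA5}) at the cost of a page of case analysis; the paper's buys brevity at the cost of an external dependency.
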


\begin{proof}
This follows from Theorem~\ref{sym} and the fact $M(G)$ is  totally balanced if and only if in $G$ every even 
closed walk of length at least $6$ has a strong chord \cite{ansteeJA5}.
\end{proof}

A matrix reformulation of this result states the following: {\em a symmetric $0, 1$ matrix which has a
$\Gamma$-free ordering also has a symmetric $\Gamma$-free ordering}. For matrices with $1$'s on 
the main diagonal, this was proved in \cite{lubiwSIAMJC16}.

Corollary \ref{cor:chss-wlk} characterizes strongly chordal graphs with possible loops by means of a forbidden 
structure, namely, even closed walks without strong chords. Recall that in the reflexive case, a characterization 
is also known in terms of forbidden induced subgraphs, namely cycles of length at least four, and trampolines 
\cite{farberDM43} (also called suns \cite{cn}). In the irreflexive case, it turns out that a characterization by
forbidden induced subgraphs is also known. First we note that all odd cycles are forbidden, because going 
around an odd cycle twice produces an even closed walk without strong chords. Thus, for irreflexive graphs 
only bipartite graphs can have a $\Gamma$-free ordering, and the characterization from \cite{golumbicJGT2} 
gives the forbidden induced subgraphs of chordal bigraphs, namely all even cycles of length greater than four. 
In conclusion, for irreflexive graphs, the forbidden induced subgraphs are all cycles of length different from four. 

Next we consider obstructions that are neither reflexive nor irreflexive.  Assume $C$ is a cycle with vertices
$0, 1, \dots, n-1, n > 4$, with one loop, at $0$; or two loops, at $0$ and $n-1$. A {\em regular fan at $0$} is 
the set of edges $0i$ for all even subscripts $i$. A {\em regular fan at $n-1$} is the set of edges $(n-1)(n-j)$ 
for all odd subscripts $j$. These concepts are illustrated in Figure \ref{F567}.

There is an infinite family of forbidden induced subgraphs consisting of even cycles with a loop and regular 
fan at $0$ as on the left side of Figure \ref{F567}, and two families of cycles with loops at $0, n-1$ -- one with 
a regular fan only at $0$, illustrated on he upper right of Figure \ref{F567}, and one with regular fans both at 
$0$ and at $n-1$, as on the lower right of Figure \ref{F567}.
It can be readily checked that each of these graphs contains an even closed 
walk of length at least $6$ without strong chords. Another infinite family of forbidden induced subgraphs 
consists of {\em weak trampolines}; these are obtained from reflexive trampolines by removing loops from
an arbitrary subset $S$ of the vertices of degree $2$, and adding an arbitrary set of disjoint edges between 
pairs of the vertices in $S$, cf. Figure \ref{F89}. (Note that this definition includes trampolines in the classical 
sense.) Finally, any path joining two reflexive vertices by a sequence of irreflexive vertices is also a forbidden 
induced subgraph, illustrated on the bottom of Figure \ref{F89}. In each of these graphs one can find an even 
closed walk of length at least $6$ without strong chords.

\begin{figure}[h]
\begin{center}
\includegraphics[height=4.4cm]{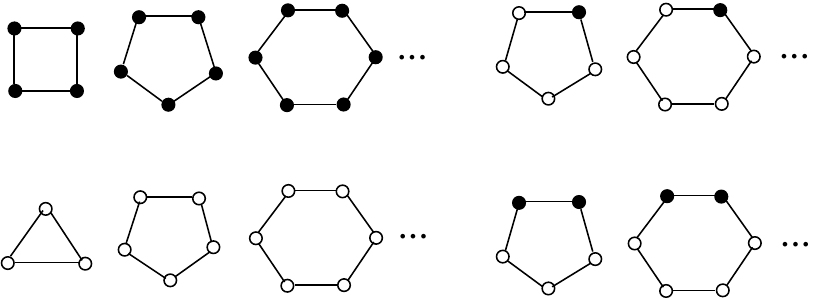}
\caption{Forbidden chordless cycles: families ${\cal F}_1, {\cal F}_2, {\cal F}_3, {\cal F}_4$} \label{F1234}
\end{center}
\end{figure}

\begin{figure}[h]
\begin{center}
\includegraphics[height=4.6cm]{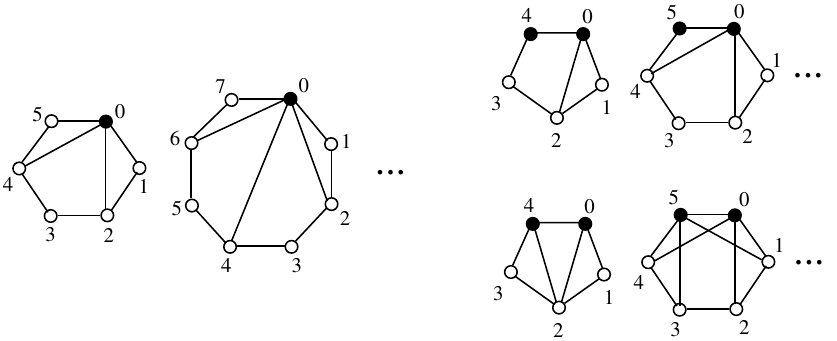}
\caption{Forbidden cycles with fans: families ${\cal F}_5, {\cal F}_6, {\cal F}_7$} \label{F567}
\end{center}
\end{figure}

\begin{figure}[h]
\begin{center}
\includegraphics[height=4.5cm]{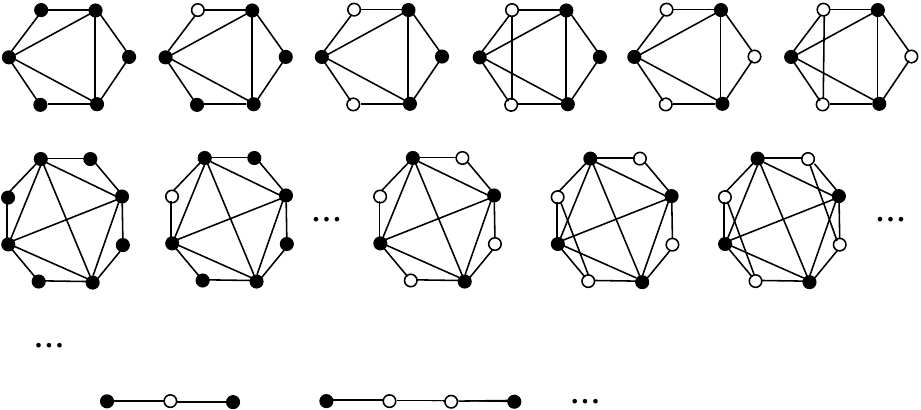}
\caption{Forbidden paths and weak trampolines: families ${\cal F}_8, {\cal F}_9$} \label{F89}
\end{center}
\end{figure}

The following more precise description characterizes strongly chordal graphs with possible loops by 
forbidden induced subgraphs.

\begin{theorem}\label{all}
A graph $G$ with possible loops is strongly chordal if and only if
it does not contain as an induced subgraph a graph in any of the 
following families:

\begin{enumerate}
  \item Family ${\cal F}_1$: reflexive cycles of length at least $4$;

  \item Family ${\cal F}_2$: irreflexive cycles of length other than $4$;

  \item Family ${\cal F}_3$: cycles of length at least $5$ with exactly
        one loop;

  \item Family ${\cal F}_4$: cycles of length at least $5$ with exactly
        two consecutive loops;

   \item Family ${\cal F}_5$: even cycles of length at least $6$ with a
        loop at $0$, with a regular fan at $0$;

  \item Family ${\cal F}_6$: cycles of length at least $5$ with two loops,
        at $0$ and $n-1$, with a regular fan at $0$;

  \item Family ${\cal F}_7$: cycles of length at least $5$ with two adjacent
        loops, at $0$ and $n-1$, and regular fans at both $0$ and $n-1$;

  \item Family ${\cal F}_8$: weak trampolines; and

  \item Family ${\cal F}_9$: paths of length at least $2$ with two loops
        at the two end vertices.
\end{enumerate}
\end{theorem}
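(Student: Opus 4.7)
The plan is to route everything through Corollary~\ref{cor:chss-wlk}: $G$ fails to be strongly chordal exactly when $G$ admits an even closed walk of length at least $6$ with no strong chord. The theorem then reduces to showing that such a \emph{bad walk} exists in $G$ if and only if $G$ contains some $\mathcal{F}_i$ as an induced subgraph.

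For the necessity direction I would exhibit in each family an explicit bad walk. For the cycle-based families $\mathcal{F}_1$--$\mathcal{F}_4$, the cycle itself, with a loop step inserted where needed to reach even length $\ge 6$ (for instance $v_0v_0v_1v_2v_2v_3v_0$ for a reflexive $C_4$), gives a walk whose required non-adjacencies are immediate from the family's description. For the fan families $\mathcal{F}_5$--$\mathcal{F}_7$ the added chords lie at even walk-distance along the cycle and hence are not strong chords, so the cycle walk stays bad. For a weak trampoline in $\mathcal{F}_8$, the zig-zag walk $y_1\,x_3\,y_2\,x_1\,y_3\,x_2\,y_1$ and its length-$2k$ analogues avoid strong chords by the defining non-adjacencies of the trampoline, and are unaffected by removing loops from, or adding matching edges among, the $y$-vertices (all $y$-$y$ positions in the walk are at even distance). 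For a path in $\mathcal{F}_9$ with loops at both endpoints, the walk $v_0v_1\cdots v_mv_mv_{m-1}\cdots v_0v_0$ is easily verified.

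For sufficiency, I would fix a bad walk $W=v_0v_1\cdots v_{2k}$ of minimum length $2k \ge 6$ and, subject to that, using the fewest distinct vertices. If $W$ is a simple cycle, then $G[V(W)]$ is a $2k$-cycle with possibly loops and chords, where the no-strong-chord condition restricts chords to even walk-distance but leaves loops unrestricted. A case analysis on the configuration of loops and chords then matches $G[V(W)]$, or a carefully chosen induced subgraph of it, to one of $\mathcal{F}_1$--$\mathcal{F}_7$ or $\mathcal{F}_9$: two non-adjacent loops on a chordless cycle immediately give an induced path in $\mathcal{F}_9$; a chord $v_0v_{2m}$ combined with the minimality of $W$ forces additional chords to line up into the regular fans prescribed by $\mathcal{F}_5$--$\mathcal{F}_7$. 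If instead $W$ has a repeated vertex, the minimality of $W$ sharply restricts the possible repetition patterns, and I expect to show that $W$ must either trace back and forth along an induced path with loops at both endpoints (yielding $\mathcal{F}_9$) or alternate between the $y$- and $x$-vertices of a weak trampoline (yielding $\mathcal{F}_8$).

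The principal obstacle is this last step: translating an abstract minimal bad walk with repetitions into a weak trampoline requires delicate bookkeeping of which positions in $W$ coincide, together with a careful parity argument tracking whether loops at revisited vertices would count as strong chords for the coinciding pair. Any repetition pattern outside the trampoline or path-with-end-loops templates must be shown to allow splicing $W$ at the repeat into a strictly shorter even closed walk still free of strong chords, contradicting the choice of $W$.
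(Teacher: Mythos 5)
Your overall strategy---reduce via Corollary~\ref{cor:chss-wlk} to analyzing an even closed walk of length at least $6$ without strong chords, then split on whether the walk is a simple cycle or has repetitions---is the same as the paper's, but your sketch has two concrete gaps in the sufficiency direction.

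First, and most seriously, you have misplaced the weak trampolines. In your cycle case you propose to match $G[V(W)]$ only to ${\cal F}_1$--${\cal F}_7$ or ${\cal F}_9$, reserving ${\cal F}_8$ for walks with repeated vertices. But a reflexive trampoline contains no member of ${\cal F}_1$--${\cal F}_7$ or ${\cal F}_9$ (it is reflexive and chordal), and its only bad walks of minimum length are \emph{simple} $2k$-cycles alternating between the degree-$2$ vertices and the clique vertices; so your cycle-case analysis, as described, terminates without finding any forbidden subgraph in a trampoline. The paper handles exactly this situation by observing that when $W$ is a cycle whose irreflexive (even-subscript) vertices form an independent set and each has its two neighbours adjacent, the subgraph induced by $V(W)$ is chordal, and then invoking Farber's theorem---packaged as Lemma~\ref{farber}---that a chordal graph containing an even closed walk of length at least $6$ without strong chords contains a sun, hence a graph in ${\cal F}_8$. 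Without importing that structural theorem about chordal graphs, no amount of ``bookkeeping'' on the walk alone will produce the clique on the $x_i$'s.

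Second, your plan to dispose of repetition patterns by ``splicing $W$ at the repeat into a strictly shorter even closed walk'' cannot work: any proper closed subwalk of a walk without strong chords has \emph{odd} length (this is the paper's Lemma~\ref{subwalk}; if the subwalk $v_0\cdots v_j=v_0$ had even length $j$, the edge $v_jv_{j+1}=v_0v_{j+1}$ would be a strong chord). Splicing out an odd closed piece leaves another odd closed walk, not a shorter even one, so minimality gives no contradiction. The paper instead analyzes these odd pieces directly, via Lemma~\ref{oneloop} (odd cycles with one loop force a regular fan or a member of ${\cal F}_2\cup{\cal F}_3\cup{\cal F}_5$) and Lemma~\ref{twoloops} (odd closed walks with exactly two reflexive vertices force a member of $\bigcup_{i=2}^7{\cal F}_i$), together with the observation that a reflexive vertex cannot be a repeated vertex of $W$. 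You would need analogues of these lemmas; the minimal-counterexample framing does not substitute for them. A smaller point: two non-adjacent reflexive vertices on a chordless cycle give a graph in ${\cal F}_9$ only when the connecting induced path has an irreflexive internal vertex; if all internal vertices are reflexive you land in ${\cal F}_1$ instead, so that branch of your case analysis also needs the distinction the paper makes in Lemma~\ref{adjloops}.
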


We write ${\cal F} = \bigcup_{i=1}^9 {\cal F}_i$.
As we noted, every graph in $\cal F$ contains an even closed walk of length at least $6$, 
without strong chords. Thus by Corollary \ref{cor:chss-wlk}, we conclude that any graph with
possible loops that contains a graph from ${\cal F}$ as an induced subgraph is not strongly 
chordal. We prove the converse of this statement is also true.

The following two lemmas describe the cases of reflexive and irreflexive graphs, and follow
from known results on chordal bipartite graphs \cite{golumbic} and strongly chordal graphs 
\cite{farberDM43}, as discussed above.

\begin{lemma} \label{sub}
If the subgraph of $G$ induced by reflexive vertices is not a chordal graph, then
$G$ contains a graph in ${\cal F}_1$ as an induced subgraph. If the subgraph of
$G$ induced by irreflexive vertices is not a chordal bigraph, then $G$ contains
a graph in ${\cal F}_2$ as an induced subgraph.
\qed
\end{lemma}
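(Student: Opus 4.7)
The plan is to reduce both assertions directly to the classical induced-cycle characterizations of chordal graphs and of chordal bigraphs. Let $G_r$ and $G_i$ denote the subgraphs of $G$ induced by the reflexive and irreflexive vertices, respectively. The basic observation underlying everything is that because $G_r$ and $G_i$ are themselves \emph{induced} subgraphs of $G$, any induced subgraph of $G_r$ (respectively $G_i$) is simultaneously an induced subgraph of $G$, and carries the same loop pattern (all reflexive, respectively all irreflexive).

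For the first statement, I would invoke the classical result from \cite{golumbic} that a graph is chordal if and only if it has no induced cycle of length at least four. If $G_r$ is not chordal, pick an induced cycle $C$ in $G_r$ of length at least $4$. By the observation above, $C$ is an induced subgraph of $G$, and every vertex of $C$ is reflexive, so $C \in {\cal F}_1$.

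For the second statement, I would split into two cases according to whether $G_i$ is bipartite. If $G_i$ is not bipartite, then $G_i$ contains a shortest odd cycle $C$, which is necessarily induced (any chord of $C$ would split it into two cycles, one of them odd and shorter, contradicting minimality). Then $C$ is an induced subgraph of $G$ whose vertices are all irreflexive, and $|V(C)|$ is odd, hence $\neq 4$, so $C \in {\cal F}_2$. If $G_i$ is bipartite but is not a chordal bigraph, the characterization of \cite{golumbicJGT2} (also recalled just before Theorem~\ref{all}) yields an induced cycle $C$ in $G_i$ of even length at least $6$; this $C$ is induced in $G$, irreflexive, and of length $\neq 4$, so it again lies in ${\cal F}_2$.

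There is essentially no obstacle in this argument: it is a direct assembly of two classical induced-cycle characterizations with the trivial observation about induced subgraphs. The only mild subtlety is the non-bipartite subcase of the second statement, where one must briefly justify that a shortest odd cycle is induced; this is the only step that requires anything beyond quoting a known theorem.
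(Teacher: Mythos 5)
Your proposal is correct and is exactly the argument the paper intends: the lemma is stated with no written proof, only the remark that it ``follows from known results on chordal bipartite graphs and strongly chordal graphs,'' i.e., the induced-cycle characterizations you invoke. You simply write out the details (including the short check that a shortest odd cycle is induced in the non-bipartite subcase), so there is nothing to compare beyond that.
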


\begin{lemma} \label{farber}
Suppose that $H$ is a chordal graph on vertices $v_0, v_1, \dots, v_{k-1}$ where 
$k \geq 6$ is even. If $N(v_i) =  \{v_{i-1}, v_{i+1}\}$ for each even $i$ and the
vertices with even subscripts form an independent set, then $H$ contains a
graph in ${\cal F}_8$ as an induced subgraph. In particular, if a chordal 
graph contains an even closed walk of length at least $6$ without strong chords, 
then it contains a graph from ${\cal F}_8$ as an induced subgraph.
\qed
\end{lemma}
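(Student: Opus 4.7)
The plan is to prove the main structural assertion first, then deduce the ``in particular'' clause by reducing a general closed walk to a cycle satisfying the hypotheses.

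First I would show that chordality of $H$ forces every short chord $v_{2i-1}v_{2i+1}$ to be an edge. If not, chordality gives a minimal $v_{2i-1}$-$v_{2i+1}$ separator $S$ which is a clique. The length-$2$ path through $v_{2i}$ must be blocked, so $v_{2i}\in S$; but $v_{2i}$'s only neighbours in $H$ are $v_{2i-1}$ and $v_{2i+1}$, forcing $S=\{v_{2i}\}$, and this singleton fails to cut the long arc $v_{2i+1},v_{2i+2},\dots,v_{2i-1}$ of the cycle, a contradiction. The base case $k=6$ is now immediate: the three forced short chords make $\{v_1,v_3,v_5\}$ a triangle and $H[\{v_0,\dots,v_5\}]$ a 3-trampoline belonging to $\mathcal{F}_8$.

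For the inductive step $k\ge 8$, write $u_i:=v_{2i-1}$. The odd-indexed vertices induce a chordal subgraph $H'$ containing the Hamiltonian cycle $u_1u_2\cdots u_{k/2}u_1$ from the short chords. Chordality of $H'$ forces a long chord, and a standard minimality argument shows its cyclic distance on $H'$ is necessarily $2$, giving a triangle $T=\{u_a,u_{a+1},u_{a+2}\}$. The two consecutive pairs of $T$ receive outer vertices $v_{2a},v_{2a+2}$ from $H$ directly; for the non-consecutive pair $\{u_a,u_{a+2}\}$ I would exhibit a third outer vertex $w$ as another common neighbour in $H'$, obtained by iterating chordality on the sub-cycle $u_au_{a+2}u_{a+3}\cdots u_{a-1}u_a$ of $H'$ until a chord incident to $u_a$ or $u_{a+2}$ appears, and verifying that its induced degree in $T\cup\{v_{2a},v_{2a+2},w\}$ is exactly $2$. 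The six resulting vertices induce a 3-trampoline.

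For the ``in particular'' clause, given a chordal graph $G$ containing an even closed walk $W$ of length $\ge 6$ without strong chords, I would take $W$ of minimum length and argue that $W$ reduces to a simple cycle whose vertex set satisfies the main hypotheses: any vertex repetition in $W$ or same-parity chord of $H[V(W)]$ admits a splicing that yields a strictly shorter even closed walk still without strong chords, contradicting minimality. The main technical obstacle I anticipate is producing the third outer vertex $w$ in the inductive step, which requires careful propagation of chordality through nested sub-cycles of $H'$ and care that $w$'s induced degree in the final subgraph is exactly the two inner vertices; the minimum-walk reduction in the ``in particular'' clause is also delicate, since the splicing argument must balance parity, length, and the no-strong-chord property simultaneously.
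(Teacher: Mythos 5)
The paper offers no proof of this lemma (it is stated with a \qed and attributed to Farber's characterization of strongly chordal graphs by forbidden suns), so your argument is necessarily from scratch; unfortunately it contains a genuine gap. Your first step is fine: the short chords $v_{2i-1}v_{2i+1}$ are indeed forced (your clique-separator argument works, or more directly: a simplicial vertex of the chordal graph $H[V(C)]$ must be some $v_{2i}$, whose two neighbours must then be adjacent), and the base case $k=6$ is correct. The gap is in the inductive step, where you insist on extracting a $3$-trampoline. The lemma only promises \emph{some} member of ${\cal F}_8$, and there are graphs satisfying all the hypotheses that contain no induced $3$-trampoline at all. Concretely, take $k=8$ and let the four odd-subscript vertices induce a $K_4$: then $H$ is the complete $4$-sun, which is chordal (each $v_{2i}$ is simplicial since its neighbourhood is an edge of the $K_4$, and deleting them leaves $K_4$) and satisfies every hypothesis, yet for any triangle $T=\{u_a,u_{a+1},u_{a+2}\}$ \emph{every} common neighbour of $u_a$ and $u_{a+2}$ is also adjacent to $u_{a+1}$. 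The third outer vertex $w$ with induced degree exactly $2$ that your construction requires simply does not exist; the only induced member of ${\cal F}_8$ in this graph is the $4$-trampoline itself. The difficulty you flagged is therefore not a technicality but a dead end: a correct proof must produce a $j$-trampoline with $j$ possibly as large as $k/2$, e.g., by inducting on the number of non-edges among the odd-subscript vertices and outputting the complete $(k/2)$-trampoline in the base case where they form a clique. This is essentially the content of Farber's theorem that the paper is citing.

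A secondary problem lies in your reduction for the ``in particular'' clause. Splicing a closed walk across a chord joining two positions of the same parity deletes an even number of walk edges and inserts one, so both resulting closed subwalks have odd length; the evenness of the walk is destroyed and your minimality argument does not produce a shorter even closed walk without strong chords. Note also that the hypotheses of the main statement permit arbitrary edges among the odd-subscript vertices, so the reduction only needs to eliminate repeated vertices and edges between even-position vertices, but the latter still requires a different device (for instance rerouting through a loop, which is available here because chordal graphs are reflexive in this paper's convention, or the separate case analysis that the paper itself carries out inside the proof of Lemma \ref{finisher} rather than inside this lemma).
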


Suppose $W: v_0, v_1, \dots, v_j=v_0, v_{j+1}, \dots, v_k$ is any walk,
and consider its closed proper subwalk $W': v_0, v_1, \dots, v_j (=v_0)$. If $j$ is even, then
the edge $v_jv_{j+1}$  is a strong chord, and if $j$ is odd and $v_0$ has a loop, then that 
loop is a strong chord. For future use, we formalize these observations as follows.

\begin{lemma} \label{subwalk}
Suppose $W: v_0, v_1, \dots, v_j=v_0, v_{j+1}, \dots, v_k$ is a walk without strong chords.
Then its closed proper subwalk $W': v_0, v_1, \dots, v_j (=v_0)$ has an odd length. Moreover,
the vertex $v_0$ must be irreflexive, unless $W=W'v_0$.
\qed
\end{lemma}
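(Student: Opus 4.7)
The proof is a direct case analysis on the parity of $j$, formalizing the informal calculation that immediately precedes the lemma.

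First I would establish that $j$ is odd, arguing by contradiction. Suppose $j$ is even; since $W'$ is a non-trivial closed proper subwalk, $j \geq 2$. The step $v_j v_{j+1}$ of $W$ is an edge of $G$, and because $v_j = v_0$ the same edge can be re-viewed as the pair of vertices at positions $0$ and $j+1$ of $W$. The index difference $j+1$ is then odd and at least $3$, so by the definition of a strong chord, the edge $v_0 v_{j+1}$ is a strong chord of $W$. This contradicts the hypothesis, so $j$ must be odd.

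Next I would establish the statement about $v_0$ being irreflexive. Suppose $v_0$ is reflexive, so that the loop $v_0 v_0$ is an edge of $G$. This loop may be regarded as a strong chord between any two positions of $W$ that both contain $v_0$ and whose indices differ by an odd number at least $3$. Positions $0$ and $j$ always both contain $v_0$, and their index difference is $j$, which is odd by the first part. Provided $j \geq 3$, this already exhibits a strong chord, contradicting the hypothesis and forcing $v_0$ to be irreflexive. The exception clause ``unless $W = W' v_0$'' covers the borderline possibility in which the pair $(0, j)$ does not yet witness a strong chord and the walk beyond $W'$ consists of nothing more than a single extra step along the loop at $v_0$; in that situation the lemma correctly imposes no further constraint on $v_0$.

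The only real delicacy in this argument is the index bookkeeping: one must be careful to identify each edge/loop with a specific pair of positions in $W$ whose difference has the required parity and size, and to verify that the exception clause is exactly the one required to accommodate the single borderline situation. Once the relevant pair of positions is picked in each case, both contradictions fall out immediately from the definition of a strong chord.
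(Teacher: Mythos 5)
Your argument is exactly the paper's: the lemma carries no separate proof because it merely formalizes the two-sentence observation immediately preceding it (when $j$ is even the walk edge $v_jv_{j+1}$, read at positions $0$ and $j+1$, is a strong chord; when $j$ is odd and $v_0$ is reflexive the loop, read at positions $0$ and $j$, is a strong chord provided $j\ge 3$), and your index bookkeeping reproduces precisely those two cases. Your handling of the exception clause (the borderline $j=1$ / $W=W'v_0$ situation) is asserted rather than verified, but the paper's own justification is no more explicit on that point, so the proposal is correct in the same sense and by the same route as the original.
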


Note that in particular a closed walk of odd length that has no strong chords can only 
self-intersect if it uses a loop, i.e., if $W: v_0, v_1, \dots, v_j=v_0, v_{j+1}, \dots, v_k=v_0$ 
is a closed walk without strong chords, and $k$ is odd, then $j=1$ or $j+1=k$.

The next two auxiliary lemmas describe the possible shape of cycles with exactly one or two
loops in a graph with possible loops which does not contain an induced subgraph from ${\cal F}$.
They will be used repeatedly in our arguments.

\begin{lemma} \label{oneloop}
Let $G$ be a graph with possible loops and let $C: v_0v_1 \dots v_k,$ $k \geq 5,$ 
be a cycle in $G$, where $v_0=v_k$ is the only reflexive vertex of $C$. Suppose moreover, 
that the subpath $v_0v_1 \dots v_{k-1}$ of $C$ has no strong chords. Then $k$ is odd and 
$v_0v_j \in E(G)$ for all even $j < k$, or $G$ contains an induced subgraph from 
${\cal F}_2 \cup {\cal F}_3 \cup {\cal F}_5$.
\end{lemma}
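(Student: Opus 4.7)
My plan is to establish the two conclusions separately: I would first show that $k$ must be odd (a direct consequence of the strong-chord hypothesis), and then show, assuming $G$ has no induced subgraph from $\mathcal{F}_2\cup\mathcal{F}_3\cup\mathcal{F}_5$, that $v_0v_j\in E(G)$ for every even $j<k$. If $k$ were even then $k\ge 6$, and the cycle-closing edge $v_0v_{k-1}$ would have odd index-difference $k-1\ge 5$, making it a strong chord of the subpath $v_0v_1\cdots v_{k-1}$, contrary to the hypothesis. The same hypothesis also rules out every edge $v_0v_j$ with $j$ odd and $3\le j\le k-2$, so the neighbours of $v_0$ on $C$ are $v_1$, $v_{k-1}$, and a (possibly empty) subset of the even-indexed vertices $v_2,v_4,\dots,v_{k-3}$.

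The technical core is a structural claim on the irreflexive part of $C$: $G[\{v_1,\dots,v_{k-1}\}]$ contains only the subpath edges $v_iv_{i+1}$. To prove it, let $v_av_b\in E(G)$ with $1\le a<b\le k-1$ and $b-a\ge 2$. An odd $b-a$ would make this a strong chord, contradicting the hypothesis; so $b-a$ is even. Among all such edges I would pick one with $b-a$ minimum; then the cycle $v_av_{a+1}\cdots v_bv_a$ has odd length $b-a+1\ge 3$. Any chord of this cycle has odd distance (a forbidden strong chord) or even distance strictly less than $b-a$ (contradicting minimality), so it is chordless. Being induced, irreflexive, and of odd length, it lies in $\mathcal{F}_2$---contradiction.

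Finally, suppose for contradiction that some even $j^*\in\{2,\dots,k-3\}$ satisfies $v_0v_{j^*}\notin E(G)$. I would list the neighbours of $v_0$ on $C$ in cyclic order as $v_1=v_{i_0},v_{i_1},\dots,v_{i_s},v_{i_{s+1}}=v_{k-1}$ (with $s$ possibly zero and each $i_1,\dots,i_s$ even), and pick the consecutive pair $v_{i_p},v_{i_{p+1}}$ whose arc contains $v_{j^*}$. Form the cycle $D:v_0v_{i_p}v_{i_p+1}\cdots v_{i_{p+1}}v_0$, of length $i_{p+1}-i_p+2$. By the structural claim, no chord of $D$ lives among $v_{i_p},\dots,v_{i_{p+1}}$ (including the potential edge $v_{i_p}v_{i_{p+1}}$ itself); and by the choice of $v_{i_p},v_{i_{p+1}}$ as consecutive neighbours of $v_0$ (together with the strong-chord hypothesis excluding odd-indexed intermediate vertices), no chord from $v_0$ to an intermediate vertex exists either. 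Hence $D$ is induced with $v_0$ as its unique reflexive vertex. Because $v_{j^*}$ is even and strictly interior to the arc, one checks in each case (whether $v_{i_p}=v_1$, $v_{i_{p+1}}=v_{k-1}$, both, or neither) that $i_{p+1}-i_p\ge 3$, so $D$ has length at least $5$ and lies in $\mathcal{F}_3$---contradiction.

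The main obstacle I anticipate is the structural claim in the second paragraph, where the minimum-distance trick converts any hypothetical long chord into an induced irreflexive odd cycle witnessing $\mathcal{F}_2$. Once that is in place, the final case split for the length of $D$ is routine; I note that the argument actually only invokes $\mathcal{F}_2$ and $\mathcal{F}_3$, even though $\mathcal{F}_5$ is permitted by the statement.
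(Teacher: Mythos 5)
Your proof is correct, and its core coincides with the paper's argument: you show that $v_1v_2\dots v_{k-1}$ is an induced path (any chord has even index difference, and a minimal one closes an induced irreflexive odd cycle, a member of ${\cal F}_2$ — this is the detailed version of the paper's terse first sentence), you exclude $v_0v_i$ for odd interior $i$ as strong chords, and you take the nearest neighbours of $v_0$ on either side of a missing even-indexed neighbour to build an induced cycle of length at least $5$ with exactly one loop, i.e., a member of ${\cal F}_3$; your $v_{i_p},v_{i_{p+1}}$ are exactly the paper's $\ell$ and $r$. The one genuine divergence is the parity of $k$. The paper deduces it last: once $v_0v_j\in E(G)$ for all even $j$, an even $k$ would make $V(C)$ induce an even cycle with a loop and a regular fan at $0$, a member of ${\cal F}_5$. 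You instead note that the cycle edge $v_0v_{k-1}$ joins two vertices of the (open) subpath $v_0v_1\dots v_{k-1}$ at index difference $k-1$, so if $k$ were even this edge would itself be a strong chord of the subpath, contradicting the hypothesis directly. Under the paper's definition of a strong chord of a walk not viewed as closed (index difference odd and not $\pm 1$, not reduced modulo $k$), this is a valid and cleaner step, and it makes the ${\cal F}_5$ alternative superfluous; since the conclusion is a disjunction, producing witnesses only from ${\cal F}_2\cup{\cal F}_3$ still proves the statement, and it is consistent with the paper's later uses of the lemma, where the cycle always has odd length. Your route buys a slightly sharper lemma and a shorter endgame; the paper's route keeps ${\cal F}_5$ in play, which costs nothing but adds a case.
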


\begin{proof}
We first note that the subpath $v_1v_2\dots v_{k-1}$ is an induced path; otherwise it
would have a chord, and since the chord isn't strong, $G$ would contain an induced
irreflexive cycle of length other than $4$, i.e., a graph from ${\cal F}_2$. Moreover, 
$v_0v_i \notin E(G)$ for each odd $i$ with $1 < i < k-1$, as these edges would be
strong chords. Suppose for contradiction that $v_0v_j \notin E(G)$ for some even $j$. 
Let $\ell$ be the greatest subscript with $1 \leq \ell < j$ such that $v_0v_{\ell} \in E(G)$ 
and $r$ be the least subscript with $j < r \leq k-1$ such that $v_0v_r \in E(G)$. Then 
$v_0, v_{\ell}, v_{\ell+1}, \dots, v_r$ induce a cycle in ${\cal F}_3$. Hence $v_0v_j \in E(G)$ 
for each even $j$; furthermore, $k$ must be odd, as otherwise $G$ would contain a cycle
in ${\cal F}_5$.
\end{proof}

\begin{lemma} \label{twoloops}
Suppose $G$ is a graph with possible loops and $W: v_0v_1 \dots v_k$ is a closed walk in 
$G$, of odd length $k > 3$, such that the subwalk $v_0v_1 \dots v_{k-1}$ has no strong chords.
Suppose moreover that $v_0$ and $v_{k-1}$ are the only reflexive vertices in $W$. Then 
$G$ contains an induced subgraph from $\bigcup_{i=2}^7 {\cal F}_i$.
\end{lemma}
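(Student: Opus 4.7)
The plan is to reduce $W$ to an induced simple cycle, classify the possible chords using the no-strong-chord hypothesis, and then exhaust the cases via repeated application of Lemma~\ref{oneloop}.

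First I would show that the subwalk $v_0v_1\cdots v_{k-1}$ visits pairwise distinct vertices. By Lemma~\ref{subwalk} applied to this subwalk and to its reverse, neither reflexive endpoint $v_0$ nor $v_{k-1}$ can repeat (any such repetition would force the endpoint to be irreflexive). For a middle repetition $v_a = v_b$ with $1\leq a<b\leq k-2$, Lemma~\ref{subwalk} applied to the subwalk starting at $v_a$ forces $b-a$ to be odd, and the closed walk $v_av_{a+1}\cdots v_b(=v_a)$ on the irreflexive vertices $v_a,\ldots,v_b$ then contains an induced odd irreflexive cycle of length at least $3$, which is a member of ${\cal F}_2$. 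So $W$ may be assumed to trace a cycle $C$ of odd length $k\geq 5$ with precisely two (adjacent) reflexive vertices $v_0, v_{k-1}$.

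Next I classify chords of $C$. The no-strong-chord hypothesis on the subwalk forces every chord incident to $v_0$ or $v_{k-1}$ to have an even-indexed target, and every middle-middle chord $v_av_b$ (with $1\leq a<b\leq k-2$) to satisfy $b-a$ even. In the middle-middle case, a chord of minimum $b-a$ yields an induced chordless odd irreflexive cycle in ${\cal F}_2$, so I may assume the only chords come from $v_0$ and $v_{k-1}$ to even-indexed cycle vertices; write $A, B \subseteq \{2, 4, \ldots, k-3\}$ for the respective chord-neighbor index sets.

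If $A = B = \emptyset$, the induced subgraph is the cycle $C$ with two adjacent loops, landing in ${\cal F}_4$. Otherwise I iterate Lemma~\ref{oneloop}: when $A\neq\emptyset$ and $j^* = \min A \geq 4$, the sub-cycle $v_0 v_1 \cdots v_{j^*} v_0$ has odd length at least $5$, only $v_0$ reflexive, and a subpath inheriting the no-strong-chord property; Lemma~\ref{oneloop} then either delivers a forbidden subgraph in ${\cal F}_2 \cup {\cal F}_3 \cup {\cal F}_5$, or forces $v_0v_j \in E$ for all even $j < j^*$, contradicting the minimality of $j^*$. Hence $j^* = 2$; iterating the same argument on the second, third, \ldots smallest elements of $A$ forces $A$ to be a consecutive block $\{2, 4, \ldots, a_m\}$. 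Symmetrically, $B$ must be a block $\{b_\ell, b_\ell+2, \ldots, k-3\}$. Depending on whether $a_m$ reaches $k-3$ and whether $b_\ell$ reaches $2$, the induced subgraph $G[\{v_0, \ldots, v_{k-1}\}]$ (or an induced subgraph obtained by deleting middle vertices lying strictly outside both fans) matches one of ${\cal F}_4, {\cal F}_6, {\cal F}_7$.

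The main obstacle will be the mixed case where both $A$ and $B$ are nonempty but partial: one must identify precisely which middle vertices to remove so that the surviving induced subgraph is unambiguously in one of ${\cal F}_4$, ${\cal F}_6$, or ${\cal F}_7$ (i.e., no stray chord from the ``far'' reflexive endpoint survives, and the resulting cycle has the prescribed regular fan). This vertex-selection step, together with checking that each sub-cycle to which Lemma~\ref{oneloop} is applied has its subpath free of strong chords, is the delicate bookkeeping that drives the final case analysis.
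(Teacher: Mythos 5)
Your overall route is the same as the paper's: reduce $W$ to a cycle, use the no-strong-chord hypothesis to confine all chords to edges from $v_0$ and $v_{k-1}$ to even-indexed vertices (middle--middle chords yielding ${\cal F}_2$), and invoke Lemma~\ref{oneloop} on sub-cycles to force the two chord sets $A$ and $B$ to be consecutive regular fans. Up to that point the write-up is sound, and in places (the cycle reduction, the explicit treatment of middle--middle chords) more careful than the paper's own text, which applies Lemma~\ref{oneloop} once to the extremal chord at each end rather than iterating from the minimum --- a cosmetic difference.

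The genuine gap is exactly the step you flag as ``delicate bookkeeping'': you never carry out the final case analysis, and the one concrete instruction you do give --- delete the middle vertices lying strictly \emph{outside} both fans --- is the wrong surgery. Removing the segment between the two fan tips disconnects the cycle (all remaining chords are incident to $v_0$ or $v_{k-1}$), so what survives is two fanned cycles glued along the edge $v_0v_{k-1}$, which lies in none of the families. The paper does the opposite: set $\ell=\max A$ (or $\ell=1$ if $A=\emptyset$) and $r=\min B$ (or $r=k-2$ if $B=\emptyset$), and delete the strict \emph{interiors} of the fans, using the chord from a loop vertex to its fan tip to re-close the cycle. If $\ell<r$, then $v_0,v_\ell,v_{\ell+1},\dots,v_r,v_{k-1}$ induces a chordless cycle with two adjacent loops, a member of ${\cal F}_4$; if $\ell\ge r$ and both fans are full ($\ell=k-3$ and $r=2$), the whole cycle lies in ${\cal F}_7$; otherwise, say $\ell<k-3$, the vertices $v_\ell,v_{\ell+1},\dots,v_{k-1},v_0$ induce a cycle on which only the fan at $v_{k-1}$ survives, and since $r\le\ell$ that fan is regular on the shortened cycle, giving ${\cal F}_6$ (symmetrically $v_{k-1},v_0,v_1,\dots,v_r$ when $r>2$). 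Without specifying this selection the argument does not land in one of the listed families, so this is a missing step rather than routine bookkeeping.
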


Recall that a strong chord in a walk not viewed as closed is an edge $v_iv_j$ with $j-i$ odd
and not equal to $1, -1$, where the difference is not computed modulo $k$.

\begin{proof}
We first observe that we may assume that $W$ is a cycle; indeed Lemma \ref{subwalk}
specifies any proper subwalk would be odd, which is not possible for an odd walk. (Note
that the last option $W=vW'$ would imply that $k$ is even, so it cannot occur.)
 
Since $W$ has no strong chords, we must have $v_0v_i \notin E(G)$ for each odd 
$i$ with $1 < i < k$ and $v_iv_{k-1} \notin E(G)$ for each odd $i$ with $0 < i < k-2$. 
Let $\ell$ be the greatest subscript with $1 \leq \ell < k-1$ such that $v_0v_{\ell} \in E(G)$,
and let $r$ be least subscript with $0 < r \leq k-2$ such that $v_rv_{k-1} \in E(G)$. 
If $\ell < r$, then $v_0, v_{\ell}, v_{\ell+1}, \dots, v_r, v_{k-1}$ induce a graph in ${\cal F}_4$. 
So assume that $\ell \geq r$. Note that $\ell$ and $r$ are both even. Applying Lemma 
\ref{oneloop} to the cycle $v_0v_1 \dots v_{\ell}v_0$ we may conclude that $v_1v_j \in E(G)$ 
for each even $j$ with $0 < j \leq \ell$ (else $G$ contains an induced subgraph from 
${\cal F}_2 \cup {\cal F}_3 \cup {\cal F}_5$). Similarly, applying Lemma \ref{oneloop} to 
the cycle $v_{k-1}v_{k-2} \dots v_rv_{k-1}$ we may conclude that $v_jv_{k-1} \in E(G)$ for 
each even $j$ with $r \leq j < k-1$. If $\ell = k-3$ and $r = 2$, then $v_0, v_1, \dots, v_{k-1}$ 
would induce a graph in ${\cal F}_7$ as an induced subgraph. So we must have
$\ell < k-3$ or $r > 2$, and so $v_\ell, v_{\ell+1}, \dots, v_{k-1}, v_0$ or
$v_{k-1}, v_0, v_1, \dots, v_r$ induce a graph in ${\cal F}_6$.
\end{proof}

\begin{lemma} \label{adjloops}
Suppose that $u, v$ are non-adjacent reflexive vertices in $G$. If there is an
induced $(u,v)$-path whose internal vertices are not all reflexive, then $G$
contains a graph ${\cal F}_9$ as an induced subgraph. In particular, if there is
a $(u,v)$-walk whose internal vertices are all irreflexive, then $G$
contains a graph ${\cal F}_9$ as an induced subgraph.
\qed
\end{lemma}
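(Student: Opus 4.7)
The plan is to combine a minimality argument with the standard extraction of an induced path from a walk.

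For the first assertion, I would consider the family of all pairs $(a,b)$ of non-adjacent reflexive vertices of $G$ that admit an induced $(a,b)$-path with at least one irreflexive internal vertex; by hypothesis this family is non-empty, since it contains $(u,v)$. Pick such a pair together with such an induced path $P:a=w_0,w_1,\dots,w_m=b$ with $m$ as small as possible. I then claim that every internal vertex of $P$ is irreflexive, which means $P$ itself lies in ${\cal F}_9$. Assume for contradiction that some $w_i$ with $1\leq i\leq m-1$ is reflexive. If $i\geq 2$, the prefix $w_0w_1\cdots w_i$ is an induced path of length $i<m$ between the non-adjacent reflexive vertices $a$ and $w_i$, so by minimality each of $w_1,\dots,w_{i-1}$ must be reflexive; analogously, if $i\leq m-2$ each of $w_{i+1},\dots,w_{m-1}$ is reflexive. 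When $2\leq i\leq m-2$ both halves apply and every internal vertex of $P$ is reflexive, contradicting the choice of $P$. The boundary cases $i=1$ and $i=m-1$ reduce similarly: either $m=2$ (so $w_1$ is the only internal vertex and is already reflexive, contradicting that $P$ has an irreflexive internal vertex) or $m\geq 3$ (in which case the long side forces all remaining internal vertices to be reflexive, again a contradiction).

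The ``in particular'' clause reduces to the first part. If $W$ is a $(u,v)$-walk whose internal vertices are all irreflexive, then because $u$ and $v$ are reflexive, neither can appear as an internal vertex of $W$; thus each occurs only at the corresponding endpoint. Deleting repeated vertices from $W$ in the usual way yields a $(u,v)$-path with vertex set contained in $V(W)$, and repeatedly taking chord-shortcuts then yields an induced $(u,v)$-path $P'$ whose internal vertices form a subset of the internal vertices of $W$. Hence they are all irreflexive, and since $u,v$ are non-adjacent, $P'$ has length at least $2$ and therefore lies in ${\cal F}_9$.

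The main obstacle is to notice that the minimum in the first part must be taken over \emph{all} pairs of non-adjacent reflexive vertices rather than only the fixed pair $(u,v)$ of the hypothesis; this is what allows a shorter induced path terminating at an intermediate reflexive vertex $w_i$ to serve as a legitimate candidate and to force the desired contradiction. Once that observation is in place, the remaining case analysis is routine.
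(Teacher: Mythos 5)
Your argument is correct. Note that the paper itself states this lemma with no written proof (it is marked \qed{} as a routine observation), so there is nothing to compare against; your write-up simply supplies the omitted details. The key point you identify is exactly right: minimizing the length of the path over \emph{all} pairs of non-adjacent reflexive vertices (not just over the given pair $(u,v)$) is what makes the prefix $w_0\cdots w_i$ and suffix $w_i\cdots w_m$ legitimate competitors once an internal $w_i$ is assumed reflexive, forcing every internal vertex of the minimal path to be irreflexive and hence exhibiting a member of ${\cal F}_9$ directly. The reduction of the ``in particular'' clause is also sound: taking a shortest (hence induced) $(u,v)$-path inside the subgraph induced by $V(W)$ gives an induced path of length at least $2$ whose internal vertices are drawn from the irreflexive internal vertices of $W$, which already is a graph in ${\cal F}_9$ without needing the first part at all.
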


We are now ready to prove the missing direction for Theorem \ref{all}.

\begin{lemma} \label{finisher}
If $G$ is not strongly chordal, then it contains a graph in ${\cal F}$ as an 
induced subgraph.
\end{lemma}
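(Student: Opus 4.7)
The starting point is Corollary \ref{cor:chss-wlk}: if $G$ is not strongly chordal then it contains some even closed walk of length at least $6$ without a strong chord; I fix one such walk $W$ of minimum length. By Lemma \ref{sub}, if the reflexive subgraph of $G$ is not chordal we already have a member of ${\cal F}_1$, and if the irreflexive subgraph is not a chordal bigraph we already have a member of ${\cal F}_2$, so I may assume both of these subgraphs are ``nice''. Two extreme cases are then immediate: if $W$ visits only irreflexive vertices, $W$ lies in the chordal bigraph spanned by those vertices, whose bi-adjacency matrix is totally balanced, contradicting the existence of $W$; and if $W$ visits only reflexive vertices, $W$ lies in the chordal reflexive subgraph, so Lemma \ref{farber} directly yields a weak trampoline in ${\cal F}_8$. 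Thus from now on $W$ mixes reflexive and irreflexive vertices.

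In the mixed case the main tool is the interplay of the minimality of $W$ with Lemma \ref{subwalk}, which forces every self-intersection of $W$ to occur at an irreflexive vertex and to enclose a subwalk of odd length. Using this I plan to argue that $W$ may be taken to be essentially an induced cycle $C$: any chord of $W$, or any shorter even closed subwalk extracted from $W$ after a self-intersection, would either be a strong chord (ruled out by assumption) or contradict the minimality of $W$. Once $W$ is an induced cycle, I classify it by the number of reflexive vertices it carries.

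If $W$ has exactly one reflexive vertex, Lemma \ref{oneloop} gives an induced subgraph in ${\cal F}_2 \cup {\cal F}_3 \cup {\cal F}_5$. If $W$ has exactly two reflexive vertices, Lemma \ref{adjloops} either produces ${\cal F}_9$ (when these two loops are non-adjacent in $G$ and joined along $C$ by a path whose internal vertices include an irreflexive one) or forces the two loops to be adjacent, in which case Lemma \ref{twoloops} returns a member of ${\cal F}_2 \cup \cdots \cup {\cal F}_7$.

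The main obstacle is the case when $W$ carries three or more loops. My plan here is to examine pairs of loops on $W$: any non-adjacent pair joined through $W$ by a stretch containing an irreflexive vertex yields ${\cal F}_9$ via Lemma \ref{adjloops}, so the loops on $W$ must form a clique. Using chordality of the reflexive subgraph together with Lemma \ref{farber} applied to a walk obtained by short-cutting $W$ across these mutually adjacent loops should then yield a weak trampoline in ${\cal F}_8$, or alternatively produce a strictly shorter even closed walk of length at least $6$ without strong chords, contradicting the minimality of $W$. The technical challenge is to ensure that such short-cuts simultaneously preserve the absence of strong chords and strictly reduce length, so that either Lemma \ref{farber} can be invoked cleanly or the induction on $|W|$ closes.
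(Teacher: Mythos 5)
Your opening moves coincide with the paper's: start from Corollary~\ref{cor:chss-wlk}, use Lemma~\ref{sub} to dispose of a non-chordal reflexive part and a non-chordal-bipartite irreflexive part, settle the all-reflexive case with Lemma~\ref{farber}, and call on Lemmas~\ref{oneloop}, \ref{twoloops} and \ref{adjloops} for cycles carrying one or two loops. However, the heart of the proof is exactly the part you defer, and both reductions you propose there fail. The reduction of $W$ to an \emph{induced} cycle by minimality does not work: a chord of $W$ that is not a strong chord joins $v_i$ to $v_j$ with $j-i$ even, and cutting $W$ along it produces two closed walks of \emph{odd} length (their lengths sum to $k+2$), not a shorter even closed walk; doubling such an odd walk turns every loop on it into a strong chord and need not decrease the length. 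More fundamentally, the obstructions in ${\cal F}_5,{\cal F}_6,{\cal F}_7,{\cal F}_8$ are cycles \emph{with} chords (fans, trampoline edges), so in a minimal obstruction from these families the witnessing walk genuinely has non-strong chords and cannot be assumed induced. The paper instead keeps $W$ as a possibly chorded cycle and spends most of the argument on those chords: it first proves any two irreflexive vertices of $W$ lie at even distance, then splits according to whether the even-subscript vertices are independent (Lemma~\ref{farber} yields ${\cal F}_8$), carry a reflexive--reflexive chord (an extremal-chord argument gives a contradiction), a reflexive--irreflexive chord (${\cal F}_9$), or irreflexive--irreflexive chords (${\cal F}_8$ or ${\cal F}_6\cup{\cal F}_7$). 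None of that analysis appears in your sketch.

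In the case of three or more loops, your key intermediate claim --- that the reflexive vertices of $W$ must form a clique because any non-adjacent pair would trigger Lemma~\ref{adjloops} --- is false. Lemma~\ref{adjloops} requires an \emph{induced} path between the two reflexive vertices whose internal vertices are not all reflexive; a stretch of $W$ that merely contains an irreflexive vertex may still admit an induced connecting path running entirely through reflexive vertices. A trampoline in which exactly one degree-two vertex is made irreflexive already defeats the claim: its remaining reflexive degree-two vertices are pairwise non-adjacent, yet every induced path between them goes through the reflexive clique, so no ${\cal F}_9$ arises. Likewise, your treatment of a self-intersecting $W$ is only a stated plan, whereas the paper devotes a substantial case analysis to it (distinguishing whether a loop is an edge of $W$, and tracking the extremal reflexive vertices of the two odd subwalks created by a repeated vertex). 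As written, the proposal correctly identifies the toolbox but leaves a genuine gap precisely where the proof's real work lies, and the one concrete bridge you offer across that gap rests on a claim that is not true.
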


\begin{proof}
Suppose that $G$ is not strongly chordal. By Corollary \ref{cor:chss-wlk}, $G$
contains an even closed walk $W: v_0v_1 \dots v_k$, of length at least $6$, 
without strong chords. Consider first the case when $W$ is not a cycle (i.e., 
$W$ contains a repeated vertex). 

Suppose first that there is a vertex which appears twice consecutively in $W$,
say $v_0=v_1$. (Thus $v_0$ is a reflexive vertex and the loop $v_0v_0$ is an 
edge of $W$.) Since $W$ has no strong chords, $v_0v_i = v_1v_i \notin E(G)$ 
for any $2 < i < k-1$. We claim that $v_i \neq v_0$ for any $1 < i < k$. Indeed, 
if $v_i = v_0$, then $v_1v_2 \dots v_i$ is a proper closed walk of $W$. This
contradicts Lemma \ref{subwalk}, as $v_1=v_i$ is a reflexive vertex. The subwalk 
$W': v_1v_2 \dots v_k$ is a closed walk of an odd length. If $v_0$ is the only reflexive 
vertex in $W$ and $W'$ is a cycle, then by Lemma \ref{oneloop} (applied to $W'$) 
$G$ contains a graph in ${\cal F}$ as an induced subgraph (since we have shown 
that $v_0v_i = v_1v_i \notin E(G)$). 

If $v_0$ is the only reflexive vertex in $W$ and $W'$ is not a cycle, then $W'$ contains 
a cycle not containing $v_0$, which implies $G$ contains a graph in ${\cal F}_2$ as an 
induced subgraph. 

Suppose that $W$ has exactly two reflexive vertices. Let $v_a$ be the other 
reflexive vertex in $W$. If $a \notin \{2, k-1\}$, then $v_0v_a \notin E(G)$ as 
otherwise $W$ contains the strong chord $v_0v_a = v_1v_a$, a contradiction to 
the assumption that $W$ has no strong chords. Thus $v_0v_1 \dots v_a$ is a 
$(v_0,v_a)$-walk whose internal vertices are all irreflexive. By Lemma \ref{adjloops}, 
$G$ contains a graph in ${\cal F}_9$ as an induced subgraph. So $a \in \{2, k-1\}$. 
If $a = k-1$, then by Lemma \ref{twoloops} (applied to $v_1v_2 \dots v_k$) $G$ 
contains a graph in $\bigcup_{i=2}^7 {\cal F}_i$ as an induced subgraph. If $a = 2$, 
then again by Lemma \ref{twoloops} (applied to $v_1v_{k-1}v_{k-2} \dots v_1$) $G$ 
contains a graph in $\bigcup_{i=2}^7 {\cal F}_i$ as an induced subgraph.

Suppose now that $W$ has more than two reflexive vertices. A similar proof as
above shows that $v_2$ and $v_{k-1}$ are both reflexive. Since $W$ does not 
contain a strong chord, $v_2v_{k-1} \notin E(G)$. Let $v_2v_{j_1} \dots v_{j_t}v_{k-1}$ 
be the shortest $(v_2,v_{k-1})$-path in the subgraph of $G$ induced by $V(W-v_0)$. 
If some $v_{j_i}$ is irreflexive, then $G$ contains a graph in ${\cal F}_9$ as an 
induced subgraph according to Lemma \ref{adjloops}. On the other hand if each 
$v_{j_i}$ is reflexive, then $v_0, v_2, v_{j_1}, \dots, v_{j_t}, v_{k-1}$ induce a graph 
in ${\cal F}_1$. This completes the case when a vertex appears twice consecutively
in $W$, i.e., a loop is an edge of $W$.

Suppose next that $W$ contains a repeated vertex but no vertex appears twice 
consecutively in $W$. Then $W$ contains a closed proper walk. By Lemma \ref{subwalk} 
such a walk is of an odd length and no reflexive vertex can be a repeated vertex in $W$. 
Without loss of generality assume that $W': v_0v_1 \dots v_c$ is such a walk.
Then $c$ is odd and $v_0$ is irreflexive. Since $c$ is odd, $W'': v_cv_{c+1} \dots v_k$ 
is a closed proper subwalk of $W$ of an odd length. If $W'$ or $W''$ contains only 
irreflexive vertices, then $G$ contains an odd cycle consisting of irreflexive vertex and 
hence a graph in ${\cal F}_2$ as an induced subgraph. So we may assume that $W'$ 
and $W''$ both contain reflexive vertices. Let $v_f$ be the reflexive vertex in $W'$ with 
the greatest subscript and $v_g$ be the reflexive vertex in $W''$ with the least subscript. 
The choice of $v_f, v_g$ implies that the walk $v_fv_{f+1} \dots v_g$ whose internal 
vertices are all irreflexive. By Lemma \ref{adjloops}, $G$ contains a graph in ${\cal F}_9$ 
as an induced subgraph, or $v_fv_g \in E(G)$. So we may assume $v_fv_g \in E(G)$. 
Since $W$ does not contain a strong chord, $g-f$ is even. If $f \neq c-1$ or $g \neq c+1$ 
then $v_fv_{f+1} \dots v_gv_f$ is a closed walk of an odd length $> 3$ without strong 
chords, in which $v_f$ and $v_g$ are the only reflexive vertices. Applying Lemma \ref{twoloops} 
to this walk, we conclude that $G$ contains a graph in $\bigcup_{i=2}^7 {\cal F}_i$ as 
an induced subgraph. Hence we may assume that $f = c-1$ and $g = c+1$. Let $v_{f'}$ 
be the reflexive vertex in $W'$ with the least subscript (possibly $f' = f$) and $v_g$ be 
the reflexive vertex in $W''$ with the greatest subscript (possibly $g' = g$). By considering 
the walk $v_{g'}v_{g'+1} \dots v_kv_1 \dots v_{f'}$ and using a similar argument as 
for $v_f, v_g$, we may conclude that $v_{f'}v_{g'} \in E(G)$, and $f' = 1$, $g' = k-1$. 
Since no reflexive vertex can be a repeated vertex, $v_{f'} \neq v_g$. Since $W$ does 
not contain a strong chord, $v_{f'}v_g \notin E(G)$. Hence $v_{f'}, v_0, v_g$ induce
a graph in ${\cal F}_9$.  

Consider now the case when $W$ is a cycle. In view of Lemmas \ref{sub}, \ref{farber}, 
\ref{oneloop}, and \ref{twoloops}, we assume that $W$ contains an irreflexive vertex 
and at least three reflexive vertices. Suppose that $W$ contains consecutive irreflexive 
vertices. Without loss of generality assume that $v_1, v_2, \dots, v_{h-1}$ are 
irreflexive vertices where $h > 2$ and that $v_0$ and $v_h$ are reflexive.
If $v_0v_h \notin E(G)$ then $v_0v_1 \dots v_h$ is a walk connecting two 
reflexive vertices whose internal vertices are all irreflexive. 
By Lemma \ref{adjloops}, $G$ contains a graph in ${\cal F}_9$ as an induced
subgraph. So assume that $v_0v_h \in E(G)$. Then $h$ is even as $W$ has
no strong chords. Applying Lemma \ref{twoloops} to the cycle $v_0v_1 \dots v_hv_0$ 
we conclude that $G$ contains a graph in $\bigcup_{i=2}^7$ as an induced subgraph. 
Hence we assume that $W$ contains no consecutive irreflexive vertices. 

We prove by contradiction that any two irreflexive vertices are of an even distance 
from each other in $W$. So suppose that $v_r, v_s$ are two irreflexive vertices 
whose distance in $W$ is odd. Since the distance of $v_r, v_s$ in $W$ is odd, $r, s$ 
have different parity. Since no consecutive vertices in $W$ are irreflexive, the distance 
of $v_r, v_s$ is at least 3. Assume without loss of generality that $r$ is odd and 
$s$ is even. If $v_{r-1}v_{r+1} \notin E(G)$, then $\{v_{r-1}, v_r, v_{r+1}\}$ 
induce a graph in ${\cal F}_9$. So assume $v_{r-1}v_{r+1} \in E(G)$. 
Similarly, we may assume that $v_{s-1}v_{s+1} \in E(G)$.  
Let $P: v_{r+1}v_{\alpha_1} \dots v_{\alpha_p}v_{s-1}$ be an induced 
$(v_{r+1},v_{s-1})$-path in the subgraph of $G$ induced
by $v_{r+1}, v_{r+2}, \dots, v_{s-1}$ and
let $Q: v_{r-1}v_{\beta_1} \dots v_{\beta_q}v_{s+1}$ be an induced 
$(v_{r-1},v_{s+1})$-path in the subgraph of $G$ induced by 
$\{v_{r-1}, v_{r-2}, \dots, v_{s+1}\}$. If any of $P$ and $Q$ contains an 
internal irreflexive vertex, then $G$ contains a graph in ${\cal F}_9$ as
an induced subgraph by Lemma \ref{adjloops}. So assume that vertices in $P$
and $Q$ are all reflexive. The subgraph of $G$ induced by $V(P) \cup V(Q)$
contains the reflexive cycle $C$ formed by $P, Q,$ and the edges $v_{r-1}v_{r+1}$
and $v_{s-1}v_{s+1}$. Note that the subscripts $r-1, r+1$ are even and $s-1, s+1$ 
are odd; thus the path $P$ starts with a vertex with even subscript and ends with 
a vertex with odd subscript (and similarly for $Q$). Thus the path $P$ includes an 
edge $v_{\alpha_i}v_{\alpha_{i+1}}$ where the subscript $\alpha_i$ is even and 
the subscript $\alpha_{i+1}$ is odd. The only chords possible in $C$ are between 
a vertex in $P$ and a vertex in $Q$, as these paths are induced; moreover since 
there are no strong chords in $W$, the only chords possible in $C$ are between
vertices with subscripts of the same parity. Note that the edge $v_{\alpha_i}v_{\alpha_{i+1}}$
does belong to some cycle (e.g., $C$), but the shortest cycle it belongs to has length
greater than three, as the vertices of $Q$ with even subscripts are not adjacent to 
$v_{\alpha_{i+1}}$, and the vertices of $Q$ with odd subscripts are not adjacent to 
$v_{\alpha_i}$. Thus $C$ induces a graph that is not chordal, and contains a graph 
in ${\cal F}_1$ as an induced subgraph by Lemma \ref{sub}. Therefore any two 
irreflexive vertices are of an even distance from each other in $W$.

Since $W$ contains at least one irreflexive vertex, we may assume without loss 
of generality that $v_0$ is irreflexive. Then all irreflexive vertices in $W$ have 
even subscripts. Suppose that there is no edge between any two vertices of 
even subscript (i.e., the vertices of even subscripts form an independant set). 
If $v_i$ is irreflexive and $v_{i-1}v_{i+1} \notin E(G)$, then $v_{i-1}, v_i, v_{i+1}$ 
induce a graph in ${\cal F}_9$. Thus we assume that for each irreflexive $v_i$, 
the two neighbours $v_{i-1}, v_{i+1}$ of $v_i$ are adjacent. If the subgraph of 
$G$ induced by the reflexive vertices in $W$ is not chordal then $G$ contains 
a graph in ${\cal F}_1$ as an induced subgraph. On the other hand if the subgraph 
of $G$ induced by the reflexive vertices in $W$ is chordal, then the subgraph of 
$G$ induced by $V(W)$ is also chordal because each irreflexive vertex is simplicial 
in the subgraph. By Lemma \ref{farber}, the subgraph of $G$ induced by $V(W)$ 
contains a graph in ${\cal F}_8$ as an induced subgraph. 

It remains to consider the case when there are edges between vertices with even 
subscripts. If there is an edge between an irreflexive vertex $v_i$ and a reflexive 
vertex $v_j$ with even $j$, then $v_{i-1}, v_i, v_j$ induce a graph in ${\cal F}_9$,
since $W$ has no strong chords. 

Assume first that $v_bv_d$ is an edge between two reflexive vertices with even 
$b, d$, and assume that $0 < b < d$, and the difference $d-b$ is as large as 
possible. Consider any induced $(v_{d+1},v_{b-1})$-path $P'$ in $G$ contained 
in $\{v_{d+1}, v_{d+2}$, $\dots, v_{k-1}, v_0, v_1, \dots, v_{b-1}\}$. If any internal 
vertex of $P'$ is irreflexive then $G$ contains a graph from ${\cal F}_9$ as 
an induced subgraph, by Lemma \ref{adjloops}. Thus $P'$ together with the 
edges $v_{b-1}v_b, v_bv_d, v_dv_{d+1}$ forms a reflexive cycle containing
the edge $v_bv_d$. By Lemma \ref{sub} we can assume that the reflexive
vertices induce a chordal graph, and thus the edge $v_bv_d$ must belong to
a $3$-cycle with some vertex $v_c$. Since $b, d$ are even, $c$ must also
be even (else at least one of $v_bv_c, v_dv_c$ is a strong chord of $W$).
If $0 < c < b$ then $d - c > d - b$ which violates the choice of $v_b, v_d$.  
Similarly, if $d < c < k$ then also $c - b > d - b$  which also violates the 
choice of $v_b, v_d$. 

Finally, we consider edges between two vertices of even subscripts when these 
two vertices are both irreflexive. If such edges form a matching in $G$ then $G$ 
contains a graph ${\cal F}_8$ as an induced subgraph. If these edges don't form 
a matching, then one can verify that $G$ must contain a graph from 
${\cal F}_6 \cup {\cal F}_7$ as an induced subgraph.
\end{proof}

This completes the proof of Theorem \ref{all}.

\section{General Digraphs}

We now return to the context of strongly chordal digraphs, and review the relevant definitions first.

In \cite{haskinsSIAMJC2,meisterTCS463} the authors define a vertex $v$ in a digraph $D$ to be {\em simplicial}, 
if for all vertices $u \in N^-(v)$ and $w \in N^+(v)$, there is an arc $uw \in E(D)$. A {\em simplicial ordering} of a 
digraph $D$ is a linear ordering $v_1, v_2, \dots, v_n$ of its vertices, such that for each $i$, the vertex $v_i$ is 
simplicial in $D \setminus \{v_1, v_2, \dots, v_{i-1}\}$. A digraph is {\em chordal} if and only if it has a simplicial 
ordering.

We will call a vertex $v$ in a digraph $D$ {\em simple} if
\begin{itemize}
  \item $v$ is simplicial,
  \item if $x, y \in N^-(v)$, then $N^+(x) \subseteq
        N^+(y)$ or $N^+(x) \supseteq N^+(y)$, and
  \item if $x, y \in N^+(v)$, then $N^-(x) \subseteq
        N^-(y)$ or $N^-(x) \supseteq N^-(y)$.
\end{itemize}

A {\em simple ordering} of a digraph $D$ is a vertex ordering $v_1, v_2, \dots, v_n$ of $D$ such that for each $i$, 
the vertex $v_i$ is simple in $D \setminus \{v_1, v_2, \dots, v_{i-1}\}$. Observe that a simple ordering is again a 
simplicial ordering. A {\em strong ordering} of a digraph $D$ is a linear ordering $v_1, v_2, \dots, v_n$ of its vertices
such that for all $i < j$ and $k < \ell$ where $i, j, k, \ell$ are not necessarily distinct, if $v_iv_k \in E(D), v_iv_{\ell} \in E(D)$ 
and $v_jv_k \in E(D)$, then $v_jv_{\ell} \in E(D)$. A strong ordering of $D$ directly corresponds to a symmetric 
$\Gamma$-free ordering of $M(D)$. A strong ordering is a simple ordering and hence a simplicial ordering. A digraph 
is strongly chordal if and only if it has a strong ordering. Thus each strongly chordal digraph is a chordal digraph.

Having a simple ordering is equivalent to having a strong ordering in the classical context, but is not equivalent for 
general digraphs. Every symmetric $\Gamma$-free ordering is a simple ordering, but the converse is not necessarily
true. This is not true even for irreflexive tournaments; the irreflexive tournament $T_1$ in Figure~\ref{minobsirrtour} 
has a simple ordering (and its adjacency matrix is totally balanced), but is not strongly chordal, i.e., the matrix has
no symmetric $\Gamma$-free ordering. 

Every strongly chordal digraph has a simple
vertex. If $v$ is a reflexive simple vertex in
a strongly chordal digraph $D$, then $N^-(v)
\cup N^+(v)$ induces a semicomplete digraph in
$D$. It follows that the underlying graph of a
reflexive strongly chordal digraph is a chordal
graph.

\begin{lemma}
\label{lem:simple}
Let $D$ be a digraph.   If no vertex of $D$ is simple,
then $D$ is not strongly chordal.
\end{lemma}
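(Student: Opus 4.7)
The plan is to prove the contrapositive: assuming $D$ is strongly chordal, exhibit a simple vertex. Since $D$ is strongly chordal, there is a strong ordering (symmetric $\Gamma$-free ordering) $v_1, v_2, \dots, v_n$ of $V(D)$. My claim is that the first vertex $v_1$ is simple, and I would verify the three defining conditions in turn using only the strong-ordering implication ``$v_iv_k, v_iv_\ell, v_jv_k \in E(D) \Rightarrow v_jv_\ell \in E(D)$ whenever $i < j$ and $k < \ell$''.

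First I would verify the two comparability conditions. For the condition on $N^-(v_1)$, given $v_s, v_t \in N^-(v_1)$ with $s < t$ and any $v_r \in N^+(v_s)$, either $r = 1$---in which case $v_tv_r = v_tv_1 \in E(D)$ is immediate---or $r > 1$, in which case applying the strong-ordering implication to $(i, j, k, \ell) = (s, t, 1, r)$, using the hypothesis arcs $v_sv_1, v_sv_r, v_tv_1 \in E(D)$, yields $v_tv_r \in E(D)$; hence $N^+(v_s) \subseteq N^+(v_t)$. The verification for $v_s, v_t \in N^+(v_1)$ is the exact mirror, using a $v_r \in N^-(v_s)$ and the quadruple $(i, j, k, \ell) = (1, r, s, t)$.

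Finally I would address the simpliciality of $v_1$: given $v_s \in N^-(v_1)$ and $v_t \in N^+(v_1)$, we want $v_sv_t \in E(D)$. The natural move is the strong-ordering implication with $(i, j, k, \ell) = (1, s, 1, t)$, whose hypotheses $v_1v_1, v_1v_t, v_sv_1 \in E(D)$ directly yield $v_sv_t$. The boundary cases $s = 1$ or $t = 1$ close at once, and the case when $v_1$ is reflexive (so $v_1v_1 \in E(D)$) is handled by this direct application.

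The hard case---and the step I expect to be the main obstacle---is $v_1$ irreflexive with both $s, t > 1$; here the direct application fails because $v_1v_1 \notin E(D)$. I would handle this either by refining the choice of strong ordering (arranging, whenever possible, that $v_1$ be reflexive, or a source, or a sink, in which case one of the three conditions becomes vacuous or follows trivially), or by leveraging the nested-neighborhood structure already established to transfer the missing arc $v_sv_t$ from another in-neighbor of $v_1$ that is comparable to $v_s$. This extra manoeuvre is the delicate ingredient that is not needed in the classical undirected setting of Farber's proof.
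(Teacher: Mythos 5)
Your overall route is the same as the paper's: the paper's entire proof consists of the assertion that the first vertex of a strong ordering is simple, and you set out to verify exactly that. Your verification of the two nested-neighbourhood conditions for $v_1$ is correct, and is more detail than the paper supplies. But the step you defer is not a removable technicality: it is simply false that the first vertex of a strong ordering must be simplicial. Take $V(D)=\{v_1,v_2,v_3\}$ with only the arcs $v_2v_1$ and $v_1v_3$. Every row of $M(D)$ in this order contains at most one $1$, so the ordering is vacuously $\Gamma$-free, yet $v_2\in N^-(v_1)$, $v_3\in N^+(v_1)$ and $v_2v_3\notin E(D)$. Neither of your suggested repairs survives this example as described: $N^-(v_1)$ is a singleton, so there is no ``other comparable in-neighbour'' from which to transfer the missing arc; and although one can reorder so that a simple vertex ($v_2$ or $v_3$) comes first here, proving that \emph{some} strong ordering always begins with a simple vertex is essentially the lemma itself, not a lighter preliminary.

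The difficulty in fact runs deeper than your proof. The directed $3$-cycle $TT^{\ast}_{3}$ (arcs $v_1v_2$, $v_2v_3$, $v_3v_1$) has a permutation matrix as its adjacency matrix, which is $\Gamma$-free in every ordering, so it is strongly chordal --- the paper itself asserts this in the tournament section. Yet none of its three vertices is simplicial (for instance $v_3\in N^-(v_1)$, $v_2\in N^+(v_1)$, but $v_3v_2\notin E$), hence none is simple. So, with the definitions taken literally, no argument of the form ``some vertex of a strongly chordal digraph is simple'' can succeed, and the paper's own one-line proof is open to the same objection as yours. In short: you have correctly isolated the problematic case, but the manoeuvre you leave for later does not exist. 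Closing the argument would require changing the choice of vertex, or restricting the setting (for a reflexive $v_1$ your direct application of the $\Gamma$-free condition with $i=k=1$ does go through, and the whole argument is sound), or revising the statement.
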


\begin{proof}
As we noted in the previous paragraph, every strongly
chordal graph has a simple vertex, which is actually
the first vertex of the strong ordering.   Hence, if
no vertex of $D$ is simple, then no vertex of $D$ can
be the first in the strong ordering, and thus it is
not strongly chordal.
\end{proof}

A vertex $v$ in a digraph $D$ is a {\em peak} vertex
of $D$ if there exist vertices $u, w \in V(D)$ such that 
$uv \in E(D), vw \in E(D)$ and $uw \in E(D)$. 

\begin{lemma}\label{lem:peak}
An irreflexive vertex that is a peak cannot be the
last vertex in a simple ordering.
\end{lemma}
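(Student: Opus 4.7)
The plan is to argue by contradiction. Assume $v$ is irreflexive, is a peak of $D$ with witnesses $u, w$ satisfying $uv, vw, uw \in E(D)$, and occurs as the last vertex $v_n$ in some simple ordering $v_1, \ldots, v_n$. Since $v = v_n$, both $u$ and $w$ lie in $\{v_1, \ldots, v_{n-1}\}$; write $u = v_a$ and $w = v_b$ with $a, b < n$, and, relabelling if necessary, assume $a \leq b$ (so $u$ is removed no later than $w$).

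The first step is to examine the moment $u$ is being processed. In the subdigraph $D_a := D \setminus \{v_1, \ldots, v_{a-1}\}$ the three vertices $u, v, w$ are all still present, and the arcs $uv$, $uw$, and $vw$ all survive. Because $u$ must be simple in $D_a$, and $v, w \in N^+_{D_a}(u)$, the out-neighbourhood comparability clause forces $N^-_{D_a}(v)$ and $N^-_{D_a}(w)$ to be $\subseteq$-comparable. Since $v \in N^-_{D_a}(w)$ while irreflexivity of $v$ excludes $v \in N^-_{D_a}(v)$, this comparison must be $N^-_{D_a}(v) \subsetneq N^-_{D_a}(w)$.

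The second step is to push this constraint forward to the moment $w$ is processed. In $D_b$, $v$ is still an in-neighbour of $w$, and the simplicial part of simplicity at $w$ requires $xy \in E(D)$ for all $x \in N^-_{D_b}(w)$ and $y \in N^+_{D_b}(w)$. In particular, every out-neighbour of $w$ in $D_b$ must also be an out-neighbour of $v$. Combining this with the nesting established at step $a$, and with the in-neighbour comparability clause for $w$, the plan is to show that the only way $v$ can sit at position $n$ without further neighbours available is for some arc forced by these conditions to be a loop at $v$ itself, contradicting irreflexivity. Morally, one is reconstructing, piece by piece from the simplicial plus comparability clauses, exactly the $\Gamma$-free condition that a strong ordering would supply for free.

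The main obstacle is precisely this gap between simple and strong orderings. Under a strong ordering the conclusion is immediate: apply the strong-ordering condition with $i = a$, $j = n$, $k = b$, $\ell = n$ to the edges $uw, uv, vw$, and read off $v_n v_n = vv \in E(D)$, contradicting irreflexivity. For a simple ordering no such shortcut is available (cf.\ the tournament $T_1$), so the delicate point is to chain together the simplicial and neighbour-comparability pieces at both steps $a$ and $b$, ruling out every alternative configuration compatible with $v$ being irreflexive at position $n$.
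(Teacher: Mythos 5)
Your proposal does not close: Step~2 is only a plan (``the plan is to show\ldots''), and the configuration you would need to rule out cannot in fact be ruled out from simplicity alone. Read literally for simple orderings, the statement is false. Take the irreflexive digraph on $\{u,v,w\}$ with arcs $uv$, $vw$, $uw$; the ordering $u,w,v$ is a simple ordering (at $u$ the only nontrivial check is that $N^-(v)=\{u\}\subseteq\{u,v\}=N^-(w)$, and the checks at $w$ and $v$ are vacuous), yet its last vertex $v$ is an irreflexive peak. The tournament $T_1$ of the paper is a larger counterexample: it has a simple ordering, its unique simple vertex is its unique non-peak, so every simple ordering of $T_1$ begins with that vertex and therefore ends in an irreflexive peak. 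So the ``delicate chaining'' of comparability clauses at steps $a$ and $b$ that you defer to is chasing a false target; no amount of work there will succeed.

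What the paper actually proves --- and what is used everywhere downstream (the remark following the corollary, and the analysis of $T_1,\dots,T_6$) --- is exactly the ``shortcut'' you describe and then set aside: if $v_n$ is an irreflexive peak witnessed by $v_iv_j, v_iv_n, v_nv_j\in E(D)$ with $i,j<n$, then rows $i,n$ and columns $j,n$ of $M(D)$ form a $\Gamma$ submatrix, the $0$ in position $(n,n)$ coming from irreflexivity; hence the ordering is not a strong ($\Gamma$-free) ordering. In other words the lemma should be read as a statement about strong orderings (the word ``simple'' in its statement is a slip), and your one-line argument with $i=a$, $j=n$, $k=b$, $\ell=n$ is the entire intended proof. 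Your Step~1 is correct as far as it goes (it does establish $N^-_{D_a}(v)\subsetneq N^-_{D_a}(w)$), but it is not needed; delete Steps~1 and~2 and keep only the computation you labelled ``the main obstacle.''
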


\begin{proof}
Let $v_1, \dots, v_n$ be a simple ordering of the vertices of
$D$, and assume $v_n$ is irreflexive and a peak vertex with
arcs $v_iv_j, v_iv_n, v_nv_j$ in $G$. Then a $\Gamma$
submatrix occurs in rows $i, n$ and columns $j, n$.
\end{proof}

\begin{corollary}
Let $D$ be an irreflexive digraph. If every vertex of
$D$ is a peak, then $D$ is not strongly chordal.
\end{corollary}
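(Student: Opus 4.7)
I would approach this corollary as an essentially immediate consequence of Lemma~\ref{lem:peak}, combined with the structural fact (explicitly noted in the paper) that every strong ordering is a simple ordering. The plan is a short proof by contradiction.

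Suppose, for contradiction, that $D$ is strongly chordal. Then by definition $D$ admits a strong ordering $v_1, v_2, \dots, v_n$ of its vertices. Since the paper already records that a strong ordering is in particular a simple ordering, the sequence $v_1, v_2, \dots, v_n$ is also a simple ordering of $D$. Consider its last vertex $v_n$. Because $D$ is irreflexive, $v_n$ is an irreflexive vertex, and because every vertex of $D$ is assumed to be a peak, $v_n$ is in particular a peak vertex of $D$; that is, there exist $u, w \in V(D)$ with $uv_n, v_nw, uw \in E(D)$.

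We can now invoke Lemma~\ref{lem:peak}, which asserts that no irreflexive peak vertex can occupy the last position of a simple ordering of $D$. This directly contradicts the fact that $v_n$ is the last vertex of the simple ordering $v_1, \dots, v_n$. Hence no strong ordering of $D$ exists, and $D$ is not strongly chordal.

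There is no real obstacle to overcome here: the corollary is a one-step deduction once one observes that strongly chordal implies the existence of a strong (hence simple) ordering, so that a last vertex must exist, and then appeals to Lemma~\ref{lem:peak} applied to that last vertex. The only mild care needed is to remember that the ``peak'' witnesses $u, w$ for $v_n$ are guaranteed to live in all of $V(D)$ (not in some smaller induced subdigraph), which is precisely what the hypothesis ``every vertex of $D$ is a peak'' provides.
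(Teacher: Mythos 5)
Your proof is correct and follows exactly the route the paper intends: take a strong (hence simple) ordering, observe its last vertex would have to be an irreflexive peak, and contradict Lemma~\ref{lem:peak}. The paper states this corollary without a written proof precisely because this one-step deduction is the intended argument, so there is nothing to add.
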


If follows from the proofs of Lemmas \ref{lem:simple}
and \ref{lem:peak} that in any strong ordering of a
strongly chordal digraph $D$ the first vertex must be
simple and the last vertex must not be an irreflexive peak.
Therefore, we observe for future reference that if an
irreflexive digraph $D$ has only one vertex that is
simple, and at the same time it is the only vertex of $D$
which is not a peak, then $D$ is not strongly chordal.

\begin{figure}[htb!]
\begin{center}
\begin{tikzpicture}
    \node [style=blackvertex] (1) at (0,0) {};
    \node [style=vertex] (2) at (0,1.5) {};
    \node [style=vertex] (3) at (1.5,1.5) {};
    \node [style=blackvertex] (4) at (1.5,0) {};
    \foreach \from/\to in {1/2,1/3,2/3,2/4,3/4,4/1}
        \draw [style=arc] (\from) to (\to);
\end{tikzpicture}
\end{center}
\vspace{-2mm}
\caption{Tournament $T_0$.}
\label{mixed_tournament}
\end{figure}
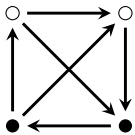

\begin{proposition}\label{jing}
If a digraph $D$ has a simple ordering, then $M(D)$ is totally balanced.
\end{proposition}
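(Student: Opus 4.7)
The plan is to reduce the proposition to the corresponding result for chordal bigraphs, imitating the bigraph-doubling trick used in the proof of Theorem~\ref{sym}. For a digraph $D$ on vertex set $\{v_1,\dots,v_n\}$, form the bigraph $B(D)$ with red vertices $v_1^-,\dots,v_n^-$ and blue vertices $v_1^+,\dots,v_n^+$, putting an edge $v_i^-v_j^+$ precisely when $v_iv_j\in E(D)$. Listing the red vertices first, the bi-adjacency matrix $N(B(D))$ coincides with $M(D)$. Hence it suffices to show that $B(D)$ has a simple ordering: by the bigraph result cited in Section~2 (from \cite{hmp}), a bigraph with a simple ordering has a strong ordering, a strong ordering of a bigraph is exactly a $\Gamma$-free ordering of its bi-adjacency matrix, and a matrix admitting a $\Gamma$-free ordering is totally balanced.

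Given a simple ordering $v_1,\dots,v_n$ of $D$, I would claim the interleaved ordering $v_1^-,v_1^+,v_2^-,v_2^+,\dots,v_n^-,v_n^+$ is a simple ordering of $B(D)$. The key observation is that the neighborhood of $v_i^-$ in $B(D)$ is $\{v_j^+:v_j\in N^+_D(v_i)\}$, and that of $v_j^+$ is $\{v_k^-:v_k\in N^-_D(v_j)\}$. Thus $v_i^-$ being simple in $B(D)$ amounts to the nestedness of $N^-_D(x),N^-_D(y)$ for all $x,y\in N^+_D(v_i)$, which is one of the two nesting clauses in the digraph definition of ``$v_i$ simple in $D$''. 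Symmetrically, $v_i^+$ simple in $B(D)$ follows from the other nesting clause. (The simplicial clause of the digraph definition of simple is stronger than needed here, since bigraph simplicity only requires nested neighborhoods.)

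The step-by-step verification is then routine. Since $v_1$ is simple in $D$, both $v_1^-$ and $v_1^+$ are simple in $B(D)$. Removing the red vertex $v_1^-$ leaves the neighborhoods of all other red vertices (and in particular the red neighbors of $v_1^+$) untouched, so nestedness is preserved and $v_1^+$ is still simple in $B(D)-v_1^-$. Finally, $B(D)-\{v_1^-,v_1^+\} = B(D-v_1)$, and $v_2,\dots,v_n$ is a simple ordering of $D-v_1$, so induction on $n$ completes the argument and yields total balance of $M(D)=N(B(D))$.

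I do not foresee any genuine obstacle. The one spot requiring a little care is the alternation: checking that removing $v_1^-$ does not spoil the simplicity of $v_1^+$, and more generally that after removing the pair $\{v_i^-,v_i^+\}$ we really do recover $B(D-v_i)$. Both are immediate once one notes that a red deletion does not alter any other red vertex's neighborhood, so nestedness in the relevant neighborhoods is automatically preserved.
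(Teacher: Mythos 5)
Your proposal is correct and follows essentially the same route as the paper: the paper's proof constructs the very same bigraph $B(D)$ with $N(B(D))=M(D)$, asserts that ``it is easy to see'' $B(D)$ inherits a simple ordering, and concludes total balance from the chordal-bigraph results. You have simply supplied the interleaving verification that the paper leaves implicit.
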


\begin{proof} Suppose a digraph $D$ has a simple ordering. A digraph $D$ defines a bigraph $B(D)$
(just as for graphs): each vertex $v \in V(D)$ gives rise to two vertices $v_1, v_2 \in V(B(D))$, and each
arc $vw \in E(D)$ gives rise to an edge $v_1w_2$ of $B(D)$. Then it is easy to see that the bigraph $B(D)$
also has a simple ordering. Thus the bi-adjacency matrix of $B(D)$ is totally balanced. Moreover, we have
$N(B(D))=M(D)$.
\end{proof}

The tournament $T_0$ in Figure~\ref{mixed_tournament} contains both reflexive and  irreflexive vertices.
It is not strongly chordal although each of the subgraphs induced by reflexive and irreflexive
vertices respectively is strongly chordal.

\section{Tournaments}

As we have seen, strongly chordal digraphs do 
not in general coincide with digraphs having a 
simple ordering, or having a totally balanced 
adjacency matrix, even for tournaments.   

We begin by addressing two
natural subcases, reflexive and irreflexive
tournaments.   Clearly, the matrix of a reflexive
directed cycle on three vertices is not totally
balanced (it is itself the bi-adjacency matrix of
an even cycle of length $6$). Thus,
every reflexive strongly chordal tournament is
acyclic, and we have the following theorem.

\begin{theorem}
\label{thm:reftour}
If $T$ is a reflexive tournament, then $T$ is strongly
chordal if and only if it is isomorphic to the reflexive
transitive tournament on $n$ vertices.
\end{theorem}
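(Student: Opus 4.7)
The plan is to treat the two directions separately, with the forward direction doing essentially all the work and the reverse direction reducing to writing down an ordering and checking the definition.

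For the forward implication, the strategy is to leverage the observation the authors made just before the theorem: the reflexive directed $3$-cycle's adjacency matrix equals the bi-adjacency matrix of a $6$-cycle and so is not totally balanced, hence the reflexive $3$-cycle is not strongly chordal. Strong chordality is hereditary (immediate from the $\Gamma$-free submatrix definition, or from Theorem~\ref{sym}), so a strongly chordal reflexive tournament $T$ cannot contain the reflexive $3$-cycle as an induced subgraph. In a tournament, any three vertices induce either a transitive triangle or a directed $3$-cycle, so $T$ has no directed $3$-cycle at all. The first step is then to invoke the standard fact that a directed cycle in a tournament contains a directed $3$-cycle (take a shortest cycle and, if it has length $\geq 4$, use the chord between its first and third vertices to produce a shorter one). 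This forces $T$ to be acyclic, and an acyclic tournament is transitive, so $T$ is isomorphic to the reflexive transitive tournament on $|V(T)|$ vertices.

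For the reverse implication, I would just exhibit a strong ordering. Take the natural ordering $v_1,\dots,v_n$ of the reflexive transitive tournament, so that $v_iv_j\in E(T)$ if and only if $i\leq j$. Given $i<j$, $k<\ell$, and $v_iv_k,v_iv_\ell,v_jv_k\in E(T)$, the arc $v_jv_k$ gives $j\leq k$, and combined with $k<\ell$ this yields $j<\ell$, so $v_jv_\ell\in E(T)$. Hence the ordering is a strong ordering, and $T$ is strongly chordal.

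There is no real obstacle here; the only substantive content is the acyclicity argument, and that is already set up for us by the paragraph preceding the theorem together with the elementary tournament fact about $3$-cycles. The write-up just needs to knit these observations together and cite heredity of strong chordality.
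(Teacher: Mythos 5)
Your proposal is correct and follows essentially the same route as the paper, which derives the forward direction from the fact that the reflexive directed $3$-cycle is not totally balanced (its matrix being the bi-adjacency matrix of a $6$-cycle), forcing acyclicity and hence transitivity. Your explicit verification that the natural ordering of the reflexive transitive tournament is a strong ordering simply fills in the converse, which the paper leaves implicit.
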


The irreflexive case, although more interesting, is
similar in flavor to the reflexive one.

For every integer $n$, $n \ge 3$, let $TT_n$ and
$TT^\ast_n$ denote the irreflexive transitive tournament
on $n$ vertices, and the tournament obtained from the
irreflexive transitive tournament on $n$ vertices where
the arc from the only source to the only sink has been
reversed.   It is easy to verify that ordering the
vertices of $TT_n$ increasingly with respect to
their in-degrees results in a $\Gamma$-free ordering;
the same order, up to reversing the arc from the first
to the last vertices, is a $\Gamma$-free ordering for
$TT^\ast_n$.   Hence, $TT_n$ and $TT^\ast_n$ are
strongly chordal digraphs for every $n \ge 3$.  For
integers $i, k, n$ such that  $2 \le n$, $0 \le i \le
n$ and $3 \le k$, we define $TT_n (i,k)$ to be the
tournament obtained from $TT_n$ by blowing up the
$i$-th vertex (in a transitive ordering) to a copy
of $TT^\ast_k$.

It is not hard to verify that the only strong
tournaments on three and four vertices are precisely
$TT^\ast_3$ and $TT^\ast_4$, and the only strong
tournament on five vertices which is also a strongly
chordal digraph is $TT^\ast_5$. The following lemma
generalizes these observations.

\begin{lemma} \label{irrstrtour}
Let $n$ be an integer, $n \ge 3$.   The only
irreflexive tournament on $n$ vertices which is both
strongly connected and strongly chordal is $TT^\ast_n$.
\end{lemma}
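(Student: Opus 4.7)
My plan is to argue by induction on $n$, using the base cases $n \in \{3, 4, 5\}$ handled in the paragraph immediately preceding the lemma: the unique strong tournaments on $3$ and $4$ vertices are $TT_3^\ast$ and $TT_4^\ast$ respectively, and among strong tournaments on $5$ vertices only $TT_5^\ast$ is strongly chordal. For the inductive step I assume the claim for all smaller orders and let $T$ be a strong strongly chordal irreflexive tournament on $n \geq 6$ vertices.

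My first step is to locate a vertex $v$ of $T$ whose removal preserves strong connectivity. Strong chordality is hereditary---a symmetric $\Gamma$-free ordering of $M(T)$ restricts to one of any induced subdigraph---so every induced sub-tournament of $T$ is automatically strongly chordal. Existence of the required non-critical vertex follows from a classical theorem of Moon: every strongly connected tournament on at least four vertices contains a vertex whose deletion leaves the tournament strong. With such a $v$, the sub-tournament $T - v$ is strong and strongly chordal on $n - 1$ vertices, so the inductive hypothesis yields $T - v \cong TT_{n-1}^\ast$. I then identify $V(T) \setminus \{v\}$ with the standard vertex set $\{u_1, \ldots, u_{n-1}\}$ of $TT_{n-1}^\ast$ (arcs $u_i \to u_j$ for $i < j$ except the reversed arc $u_{n-1} \to u_1$), and set $A = \{i : v \to u_i\}$ and $B = \{i : u_i \to v\}$; both $A$ and $B$ are nonempty since $T$ is strong. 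The goal is then to show $A = \{k, k+1, \ldots, n-1\}$ and $B = \{1, 2, \ldots, k-1\}$ for some $k \in \{2, \ldots, n-1\}$: each such $(A, B)$ realizes $v$ as the $k$-th vertex in a standard ordering of $TT_n^\ast$, giving $T \cong TT_n^\ast$ directly.

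The heart of the argument, and the main obstacle, is ruling out every other $(A, B)$. Suppose for contradiction that the claim fails; a short check (using $n \geq 6$) shows that one can then choose indices $i < j$ with $u_i \in A$, $u_j \in B$, and $(i, j) \neq (1, n-1)$, so that $u_i \to u_j$ is an arc in $TT_{n-1}^\ast$ and $\{v, u_i, u_j\}$ induces the $3$-cycle $v \to u_i \to u_j \to v$. I would augment these three vertices to a five-element set by adjoining $u_1$ and $u_{n-1}$---or, if $\{i, j\}$ already meets $\{1, n-1\}$, by instead adjoining two further vertices from $\{u_2, \ldots, u_{n-2}\}$, a set of size at least $3$ since $n \geq 6$---to obtain an induced sub-tournament $T'$ of $T$ on five vertices. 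A case analysis on whether $u_1$ and $u_{n-1}$ belong to $A$ or to $B$ then shows in every case that $T'$ is strongly connected but has score sequence different from $(1,1,2,3,3)$, the score sequence of $TT_5^\ast$; hence $T' \not\cong TT_5^\ast$. Applying the base case $n = 5$, the sub-tournament $T'$ is not strongly chordal, contradicting the hereditary strong chordality of $T$. Therefore $(A, B)$ has the claimed form, so $T \cong TT_n^\ast$, and the induction is complete.
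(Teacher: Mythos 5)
Your overall strategy---induction on $n$, deleting a vertex whose removal keeps $T$ strong, invoking the hypothesis to get $T-v\cong TT^\ast_{n-1}$, and deriving a contradiction from a five-vertex strong subtournament that cannot be $TT^\ast_5$---is the same as the paper's. The genuine gap is in your endgame, where you augment the triangle $v\to u_i\to u_j\to v$ to a five-vertex witness and assert that ``in every case $T'$ is strongly connected but has score sequence different from $(1,1,2,3,3)$.'' That assertion fails for the subtournaments you prescribe. Concretely, take $n=6$, $T-v\cong TT^\ast_5$ on $u_1,\dots,u_5$, with $v\to u_1$, $u_2\to v$, $v\to u_3$, $v\to u_4$. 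Then $(i,j)=(1,2)$ meets $\{1,n-1\}$, so your recipe adjoins two further vertices from $\{u_3,u_4\}$; but in $T[\{v,u_1,u_2,u_3,u_4\}]$ the vertex $u_4$ is dominated by all of $u_1,u_2,u_3,v$, so this subtournament is not strong and the base case yields no contradiction. Worse, even a well-chosen strong witness can defeat the score-sequence test: if additionally $v\to u_5$, then $T[\{v,u_1,u_2,u_3,u_5\}]$ is strong with out-degree sequence exactly $(3,3,2,1,1)$, yet is not isomorphic to $TT^\ast_5$ (its out-degree-$2$ vertex $u_1$ dominates the out-degree-$3$ vertex $u_2$, which never happens in $TT^\ast_5$). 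So the needed contradiction does exist, but neither your choice of the five vertices nor the invariant you test is adequate as stated; the case analysis you defer to is precisely where the work lies, and it does not go through in the form you describe.

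The paper sidesteps exactly this trap by running the elimination in two stages: it first establishes $u_1\to v$ and $v\to u_{n-1}$ by separate degree arguments, and only then rules out an inversion $v\to u_i$, $u_j\to v$ with $1<i<j<n-1$; in that restricted configuration the subtournament $T[\{u_1,u_i,u_j,u_{n-1},v\}]$ automatically has four vertices of out-degree at least $2$, which a strong five-vertex strongly chordal tournament (necessarily $TT^\ast_5$, with only three such vertices) cannot have. To repair your version, either reinstate that two-stage structure, or replace the score-sequence comparison by an actual isomorphism test against $TT^\ast_5$ and verify strong connectivity for a correctly chosen augmenting pair in each boundary case.
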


\begin{proof}
By induction on $n$.   We have already noticed that
the statement is true for $n \in \{ 3, 4, 5 \}$, so
let $n$ be at least $6$, and let $T$ be a strongly
connected and strongly chordal tournament.   Since
$T$ is strong, it is pancyclic, and hence it contains
a vertex $v$ such that the subtournament $T'$ obtained
from $T$ by deleting $v$ is strong.   Strong chordality
is a hereditary property and thus, by induction
hypothesis, $T'$ is isomorphic to $TT^\ast_{n-1}$.
Consider an ordering $v_1, v_2, \dots, v_{n-2}, v_{n-1}$
of $V(T')$ such that $v_{n-1} \to v_1$ and the reversal
of this arc results in the transitive tournament $TT_{n-1}$.

The following fact will be useful for the main argument
of the proof. Recall that the statement is true for $n
= 5$, and hence, every strongly connected subtournament
of $T$ on $5$ vertices should be isomorphic to $TT^\ast_5$.
The in-degree and out-degree sequences of $TT^\ast_5$ are
both $(3, 3, 2, 1, 1)$ (each in non-increasing order). In
order to obtain a contradiction, it will suffice to find
a strong subtournament of $T$ on $5$ vertices containing
at least four vertices with in- or out-degree at least $2$.

Since $T$ is strong, arcs from $T'$ to $v$ and from $v$
to $T'$ must exist in $T$.   We begin by showing that
$v_0 \to v$; suppose that $v \to v_0$ to reach a
contradiction.   Notice first that, if $d^-_T (v) \ge 2$,
then for any two integers $i, j$ with $1 < i < j < n-1$
such that $S = T[v_1, v_i, v_j, v_{n-1}, v]$ is a
subtournament of $T$ with $d^-_S (v) \ge 2$, we have that
$d^-_S(v_1), d^-_S(v_j), d^-_S(v_{n-1}), d^-_S(v) \ge 2$.
This, together with the fact that $S$ is strong, results
in a contradiction.   Thus, $d^-_T (v) = 1$.   Choose
integers $i$ and $j$ such that $1 < i < j < n-1$ and at
least one $v_i, v_j, v_{n-1}$ dominates $v$.   Again, let
$S$ be the induced subtournament $S = T[v_1, v_i, v_j,
v_{n-1}, v]$ of $T$, it is not hard to observe that, if
$v \to v_i$, then $S$ is a strong tournament on $5$
vertices with at least four vertices of in-degree at
least $2$, a contradiction. Hence, $v_i \to v$ and $v
\to \{ v_1, v_j, v_{n-1} \}$. Recall that $S$ is isomorphic
to $TT^\ast_5$, so it must contain an arc whose reversal
results in $TT_5$.  Only vertices $v$ and $v_i$ have
in-degree $1$ in $S$, so the only arcs that might have
this property are $(v_i, v)$ and $(v_1, v_i)$, but it is
routine to verify that none of them achieve the desired
result, a contradiction. Therefore $v_1 \to v$, and, an
analogous argument shows that $v \to v_{n-1}$.

We affirm that there exists $i \in \{ 2, \dots, n-2 \}$
such that $\{ v_1, \dots, v_i \} \to v \to \{ v_{i+1},
\dots, v_{n-1} \}$.   Suppose for a contradiction that
there are integers $i, j$ such that $1 < i < j < n-1$
and $\{ v_1, v_j \} \to v \to \{ v_i, v_{n-1} \}$.
Let $S$ be the induced subtournament $S = T[v_1, v_i,
v_j, v_{n-1}, v]$ of $T$.   Then, $d^+_S (v_1), d^+_S
(v_i), d^+_S (v_j), d^+_S (v) \ge 2$, a contradiction.
Hence, it is clear that reversing the arc $(v_{n-1},
v_0)$ in $T$ results in a transitive tournament, and
therefore $T$ is isomorphic to $TT^\ast_n$.
\end{proof}

Thus, in the strongly connected case, the only
strongly chordal irreflexive tournaments are very
close to a transitive tournament. As the following
argument shows, in the non-strong case, the
similarities are even more pronounced.

\begin{lemma}
Let $T$ be an irreflexive strongly chordal tournament.

If $T$ is non-strong, then $T$ is isomorphic to $TT_n (i,k)$ for some integers $i, k, n$
such that $1 \le i \le n$, $3 \le k$ and $2 \le n$.
\end{lemma}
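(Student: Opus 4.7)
The plan is to study $T$ via its decomposition into strong components and combine this with the classification established in Lemma~\ref{irrstrtour}. Since $T$ is non-strong, its condensation is a transitive tournament on strong components $C_1, C_2, \dots, C_m$ with $m \geq 2$, indexed so that every arc between $C_s$ and $C_t$ with $s<t$ is directed from $C_s$ to $C_t$. Each $C_i$ inherits strong chordality; and, being a strong irreflexive tournament, it has either exactly one vertex or at least three (no two-vertex tournament is strongly connected). In the latter case, Lemma~\ref{irrstrtour} forces $C_i \cong TT^\ast_{|C_i|}$.

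The heart of the argument is to prove that at most one component is non-trivial, and I would argue this by contradiction. Suppose two components $C_s$ and $C_t$ with $s<t$ both have size at least three, and pick directed 3-cycles $a \to b \to c \to a$ in $C_s$ and $d \to e \to f \to d$ in $C_t$ (such 3-cycles exist in every $TT^\ast_k$ with $k \geq 3$, for example on the vertices $1,2,k$). The induced subtournament $T'$ on $\{a,b,c,d,e,f\}$ consists of these two 3-cycles together with every arc from $\{a,b,c\}$ to $\{d,e,f\}$. The expected outcome is that $T'$ has no simple vertex: for any $v \in \{a,b,c\}$ the only in-neighbour in $T'$ is the cyclic predecessor $v^-$, while the cyclic successor $v^+$ lies in $N^+(v)$, yet $v^- \not\to v^+$ in a 3-cycle, so $v$ is not simplicial; symmetrically, every $v \in \{d,e,f\}$ has its cyclic predecessor among its in-neighbours and its cyclic successor as its unique out-neighbour inside $\{d,e,f\}$, again with $v^- \not\to v^+$, so $v$ fails simpliciality too. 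Lemma~\ref{lem:simple} then rules $T'$ out as strongly chordal, contradicting the hereditary property applied to the strongly chordal tournament $T$.

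Once the key step is in hand, the remainder is bookkeeping: if exactly one $C_i$ is non-trivial, with $|C_i|=k \geq 3$, then $C_i \cong TT^\ast_k$ and every other component is a singleton, so the transitive arrangement of components is precisely what one obtains by blowing up the $i$-th vertex of $TT_m$ into $TT^\ast_k$; that is, $T \cong TT_m(i,k)$ with $n = m$. (If every component is a singleton, then $T$ is itself the transitive tournament $TT_m$; this degenerate case is either subsumed into the statement by convention or treated separately.) The main obstacle is the six-vertex ``no simple vertex'' analysis in the middle paragraph: verifying that the two embedded 3-cycles collaborate to eliminate every simplicial candidate is where the whole argument stands or falls.
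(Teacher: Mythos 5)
Your proposal is correct and follows essentially the same route as the paper: the decisive step in both is that two non-trivial strong components would each contain a directed triangle, and the six vertices of two such triangles induce exactly the tournament $T_6$ (two $3$-cycles with all arcs from one to the other), which is an obstruction to strong chordality; the rest is the same bookkeeping via Lemma~\ref{irrstrtour}. The only difference is cosmetic --- you verify that $T_6$ is not strongly chordal directly and correctly via Lemma~\ref{lem:simple} (no vertex of $T_6$ is simplicial), whereas the paper simply cites the earlier observation that $T_6$ is a minimal obstruction.
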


\begin{proof}
It suffices to notice that, since $T_6$ (Figure
\ref{minobsirrtour}) is a minimal obstruction for
strong chordality and every strong tournament
contains a directed triangle, it is not possible for
two different strong components of $T$ to have more
than one vertex.
\end{proof}

We note that $TT^\ast_n = TT_1(1,n)$ and hence we can state both results together
as follows.

\begin{theorem}
Any irreflexive strongly chordal tournament is isomorphic to some $TT_n (i,k)$ with 
$1 \le i \le n$, and $3 \le k$.
\end{theorem}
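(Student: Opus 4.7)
The plan is to observe that this theorem is a packaging result: it combines the two immediately preceding lemmas into a single uniform description. The non-strong case has just been handled by the unnamed lemma, which directly gives the form $TT_n(i,k)$ with $1 \le i \le n$, $3 \le k$, and $2 \le n$. The strongly connected case has been handled by Lemma~\ref{irrstrtour}, which gives $T \cong TT^\ast_n$. The only conceptual step is to recognize that $TT^\ast_n$ already fits the required family, via the identification flagged just before the theorem statement: $TT^\ast_n = TT_1(1,n)$, where one uses $n=1$ (a single vertex of the outer transitive tournament) blown up to a copy of $TT^\ast_n$ itself.

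Accordingly, I would structure the proof in two short cases. First, if $T$ is strongly connected, then by Lemma~\ref{irrstrtour} we have $T \cong TT^\ast_n$, and by the observation above this equals $TT_1(1,n)$, so $T$ has the required form with $i=1$, $k=n \ge 3$. Second, if $T$ is not strongly connected, the previous lemma gives $T \cong TT_m(i,k)$ directly with $1 \le i \le m$, $k \ge 3$. Combining the two cases yields the theorem.

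The main obstacle is not a mathematical one but rather a bookkeeping one: one must check that the family $\{TT_n(i,k) : 1 \le i \le n,\ k \ge 3\}$ genuinely covers both output types (the strong case $TT^\ast_n$ and the non-strong blowups). This is precisely why the remark $TT^\ast_n = TT_1(1,n)$ is recorded just above the theorem; invoking it is the only subtlety, and once it is in hand the proof is essentially a one-line appeal to the two preceding lemmas.
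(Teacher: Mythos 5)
Your proposal is correct and matches the paper's own treatment exactly: the paper states this theorem as an immediate consequence of Lemma~\ref{irrstrtour} (the strong case) and the preceding lemma (the non-strong case), glued together by the observation $TT^\ast_n = TT_1(1,n)$, and offers no further proof. Your two-case structure and your identification of the only subtlety (checking that $TT^\ast_n$ lies in the family) are precisely what the paper intends.
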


In addition to the nice simple structure that
irreflexive strongly chordal tournaments have, it
is possible to characterize them by a small set of
minimal forbidden induced subgraphs.

It is a tedious, yet straightforward process to check
that all the strong tournaments on $5$ vertices, except
for $TT^\ast_5$, are minimal obstructions for strong
chordality, and the tournament $T_6$ obtained by
taking two disjoint copies of the directed $3$-cycle
and adding all the arcs from one to the other is also
a minimal obstruction for strong chordality.

Let $\cal{T}$ be the family of tournaments $\{ T_1,
\dots, T_6 \}$ depicted in Figure \ref{minobsirrtour}.
By applying Lemma \ref{lem:peak} to tournaments $T_3,
\dots, T_6$ it is easy to conclude, after a simple
exploration, that these tournaments are not strongly
chordal.   Similarly, using Lemma \ref{lem:simple} on
$T_2$, we conclude that it is not strongly chordal.
As for $T_1$, a simple exploration shows that there
is only one vertex which is not a peak, and at the
same time it is the only vertex which is simple.
Therefore, as observed after the proof of Lemma
\ref{lem:peak}, $T_1$ is not strongly chordal. Since
every irreflexive tournament on $4$ vertices is
strongly chordal, we conclude that tournaments in
$\cal{T}$ are minimal digraph obstructions for
strong chordality in the family of tournaments.

\begin{theorem}
\label{thm:irrtouGfree}
If $T$ is an irreflexive tournament then $T$ is strongly
chordal if and only if it is $\cal{T}$-free.
\end{theorem}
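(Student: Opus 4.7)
My plan is to handle the two implications separately. The forward one is immediate from heredity of strong chordality, while the backward one reduces to the structural analysis of $TT_n(i,k)$ already carried out in this section: every $\mathcal{T}$-free irreflexive tournament will turn out to be either a transitive $TT_n$ or some $TT_n(i,k)$, and both are strongly chordal.

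For the forward direction, a symmetric $\Gamma$-free ordering of $M(T)$ restricts to a symmetric $\Gamma$-free ordering on any principal submatrix, so strong chordality is hereditary under vertex-induced subdigraphs. If some $T_i \in \mathcal{T}$ occurred as an induced subtournament of a strongly chordal $T$, then $T_i$ would itself be strongly chordal, contradicting the verifications carried out in the paragraph preceding the theorem that each $T_i$ is a minimal obstruction.

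For the backward direction, I will assume $T$ is $\mathcal{T}$-free and show that $T \cong TT_n$ or $T \cong TT_n(i,k)$ for admissible $n,i,k$. This suffices because $TT_n$ is trivially strongly chordal and $TT_n(i,k)$ has an explicit strong ordering obtained by splicing the in-degree ordering of $TT_n$ with the known $\Gamma$-free ordering of $TT_k^*$ at position $i$. The first step is to argue that $T$ has at most one non-trivial strong component. Any strong tournament on at least two vertices is pancyclic and hence contains a directed triangle, while the condensation of a tournament is itself a transitive tournament. Two non-trivial strong components would therefore allow me to pick a directed triangle from each and obtain an induced $T_6$, contradicting $\mathcal{T}$-freeness. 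If $T$ has no non-trivial strong component then $T$ is a transitive tournament $TT_n$ and we are done; otherwise let $S$ denote the unique non-trivial strong component.

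The final step is to show $S \cong TT_k^*$, for which I will rerun the induction of Lemma~\ref{irrstrtour} with ``$\mathcal{T}$-free'' substituted for ``strongly chordal'' throughout. The crucial input of that proof is hereditary and transfers directly: every strongly connected induced five-vertex subtournament must be $TT_5^*$, because the remaining strong 5-vertex tournaments are precisely $T_1,\dots,T_5$, each of which is forbidden. The induction hypothesis ``every strongly connected $\mathcal{T}$-free tournament on fewer vertices is isomorphic to $TT^*$'' slots into the same place where Lemma~\ref{irrstrtour} used $T' = T - v \cong TT_{n-1}^*$. The hard part, I expect, will be the bookkeeping required to verify that every five-vertex subtournament examined in that case analysis is actually strongly connected, so that $\mathcal{T}$-freeness genuinely forces it to be $TT_5^*$; once this is checked, the argument delivers $S \cong TT_k^*$, and the transitivity of the condensation supplies the linear order of the remaining singletons around $S$, giving $T \cong TT_n(i,k)$ as required.
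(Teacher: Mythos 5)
Your route is genuinely different from the paper's. The paper takes a \emph{minimal obstruction} $T$ and, splitting on whether $T$ has a non-peak vertex, shows directly that $T\in\mathcal{T}$ (via Lemmas \ref{lem:simple} and \ref{lem:peak}, a computational check for at most six vertices, and a case analysis for seven or more); it never needs to certify which tournaments \emph{are} strongly chordal. You instead classify the $\mathcal{T}$-free tournaments as the $TT_n(i,k)$ and then must exhibit strong orderings for them. The classification half of your plan is sound: the condensation/$T_6$ argument is correct, and the induction of Lemma~\ref{irrstrtour} does use strong chordality only through heredity and through the fact that every strong five-vertex subtournament is $TT^\ast_5$ --- both of which hold verbatim under $\mathcal{T}$-freeness, since the strong five-vertex tournaments other than $TT^\ast_5$ are exactly $T_1,\dots,T_5$. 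Be aware, though, that the ``bookkeeping'' you defer (re-running that case analysis) is the bulk of the work, not an afterthought.

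The concrete error is the ordering you propose for $TT_n(i,k)$. Splicing the $\Gamma$-free ordering of $TT^\ast_k$ into position $i$ of the in-degree ordering of $TT_n$ is \emph{not} $\Gamma$-free once $i\ge 2$: let $v_1$ be a singleton preceding the blown-up block and let $u_1,\dots,u_k$ be the block in its $TT^\ast_k$ order. Then rows $v_1,u_k$ and columns $u_1,u_2$ give
\[
\left[\begin{matrix} 1 & 1 \\ 1 & 0\end{matrix}\right],
\]
because $v_1$ dominates the entire block, $u_k\to u_1$ is the reversed arc, and $u_2\to u_k$ (so the $(u_k,u_2)$ entry is $0$). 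The conclusion you want is still true, but the witness must be the ordering the paper uses later for tournaments with loops: put the $TT^\ast_k$ block \emph{first} and then append the singletons working outward from the block, each new vertex being a source or a sink of everything already placed; adding a source or sink at the end of a $\Gamma$-free ordering preserves $\Gamma$-freeness. With that replacement your argument closes.
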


\begin{proof}
As we have already observed in the previous paragraph,
tournaments in $\cal{T}$ are minimal digraph
obstructions for strong chordality. We will show that
in the family of tournaments, these are all.  Notice
that if a tournament $T$ has a $\Gamma$-free ordering,
then we can add a sink or
a source, and still have a $\Gamma$-free ordering, it
suffices to add the new vertex at the end of the ordering.
Thus, tournament minimal obstructions for strong chordality
have neither sinks nor sources.

Let $T$ be a tournament which is a minimal obstruction
for strong chordality. Assume first that $T$ contains
a non-peak vertex, say $v$.   Then, by the definition
of peak, we obtain that $N^+ (v) \to N^- (v)$.   If
$|N^+(v)|, |N^-(v)| \ge 2$, then $v$ together with any
two vertices in $N^+(v)$ and any two vertices in
$N^-(v)$ induce a copy of $T_1$.   The minimality of
$T$ implies that $T$ is isomorphic to $T_1$.   Else,
either $|N^+(v)| = 1$ or $|N^-(v)| = 1$, we will assume
without loss of generality the former case. Since all
tournaments on four or less vertices admit a
$\Gamma$-free ordering, it must be the case that
$|N^-(v)| \ge 3$.   If $T[N^+(v)]$ contains a directed
triangle, then it is easy to find an induced $T_2$
in $T$ using the vertices of such triangle, $v$ and
the only vertex in $N^+(v)$.   Else, $T[N^+(v)]$ is
a transitive tournament, but in this case it is easy
to verify that $T$ is isomorphic to $TT^\ast_n$,
contradicting that $T$ is not strongly chordal.

Thus, we may assume that every vertex in $T$ is a
peak vertex.   A simple computational search shows
that there are no tournaments with this property
on less than $5$ vertices, that the only tournaments
on $5$ vertices where every vertex is a peak vertex
are $T_3$, $T_4$ and $T_5$, and the only tournament
on six vertices with this property is $T_6$.   So,
we may assume that $T$ has at least $7$ vertices.
Let $v$ be a vertex in $T$, and consider $T' =
T - v$.   Since $T'$ is strongly chordal, then
there is at least one non-peak vertex, say $u$, in
$T'$.   As in the previous case, if $|N_{T'}^+(u)|,
|N_{T'}^-(u)| \ge 2$, then we can find a copy of
$T_1$ in $T$, contradicting the choice of $T$.
Thus, either $d_{T'}^-(u) \le 1$ or $d_{T'}^+(u)
\le 1$.   Let us assume without loss of generality
the latter.  Consider first the case where $d_{T'}^+
(u) = 1$ and let $w$ be the only out-neighbour of
$u$ in $T'$.   Then, also as in the previous case,
if there is a directed triangle in $N_{T'}^-(u)$,
then we can find a copy of $T_2$ as an induced
subgraph of $T$, contradicting the choice of
$T$.   Thus, $N_{T'}^-(u)$ must induce a transitive
tournament. Since $u$ is a non-peak vertex, we have
that $w \to N_{T'}^-(u)$, and hence, $T'$ is
isomorphic to $TT^\ast_{n-1}$.

Recall that in $T$, $u$ is a peak vertex in $T$
and, since $N_{T'}^+(w) = N_{T'}^-(u)$, then
either $u \to v$ and there is a vertex $x$ such
that $x \to u$ and $x \to v$, or $v \to u$ and
$v \to w$.   In the latter case, since $v$ is
a peak vertex in $T$, there must exist a vertex
$x$ in $N_{T'}^-(u)$ such that $x \to v$. Also,
since $w$ is a peak vertex in $T$, there must
be a vertex $y$ in $N_{T'}^-(u)$ such that $v
\to y$.   We have that $\{ u, v, w, x, y \}$
induces a tournament on five vertices such
that every vertex is a peak vertex, i.e., one
of the tournaments $T_3$, $T_4$ or $T_5$,
contradicting the choice of $T$.   In the
former case, either $v \to w$, and in order
for $w$ to be a peak vertex in $T$ there is
a vertex $x$ in $N_{T'}^-(u)$ such that $v \to
x$, or $w \to v$, and in order for $v$ to be
a peak vertex in $T$ there is a vertex $x$ in
$N_{T'}^-(u)$ such that $v \to x$.   In either
case, if $y$ is any vertex in $N_{T'}^-(u)$
different from $x$, the set $\{ u, v, w, x, y
\}$ induces one of the tournaments $T_3$,
$T_4$ or $T_5$, contradicting again the choice
of $T$.   This closes the case where $d^+_{T'}(u)
= 1$.

So, let us assume that $d_{T'}^+(u) = 0$. Notice
that $T$ has neither sinks nor sources, and thus,
$u \to v$.   Since $v$ is a peak vertex in $T$,
there are vertices $w$ and $x$ in $T$ such that
$x \to v$, $v \to w$ and $x \to w$.   Since $u$
is dominated by every vertex in $T'$, we have
that $x \ne u$ and since $v \to w$, then $w \ne
u$.   Thus, $u, v, w$ and $x$ are four different
vertices, $(u, v, w, u)$ is a directed cycle in
$T$, and $x \to \{ u, v, w \}$.   If every vertex
in $T$ dominates $\{ u, v, w \}$, then, the
digraph $T_1$ induced by $V_T - \{ u, v, w \}$ in
$T$ should be acyclic, otherwise $T$ would contain
a copy of $T_6$ as an induced subgraph.   But in
this case, an ordering of $V_T$ where $u, v$ and
$w$ are the first three vertices, and then the
vertices of $T_1$ are ordered in such a way that
their adjacency matrix is a lower triangular matrix,
is a $\Gamma$-free ordering of $T$, contradicting
the choice of $T$.   Thus, there is at least one
vertex in $T$, different from $u$ dominated by
$v$ or by $w$.   Consider the set $S$ of vertices
in $T$ that dominate $u, v$ and $w$; again, this
set must induce an acyclic subgraph of $T$, and
hence, there is a vertex of zero in-degree in this
induced subgraph; assume without loss of
generality that $x$ has this property.   But $x$
cannot be a source in $T$, and thus, there must
be a vertex $y$ dominating $x$, and by the choice
of $x$, we have that $v \to y$ or $w \to y$.   In
either case it is routine to verify that the
subgraph of $T$ induced by $\{ u, v, w, x, y \}$
is one of the tournaments $T_3$, $T_4$ or $T_5$,
contradicting the choice of $T$.

Since the cases are exhaustive, we conclude that
the only minimal obstructions for strong chordality
in the class of irreflexive tournaments are those
included in the family $\cal{T}$.
\end{proof}

\begin{center}
\begin{figure}[htb!]
\begin{tikzpicture}

\begin{scope}[scale=0.6]

\begin{scope}[xshift=-7.5cm,scale=0.9,rotate=90]
\foreach \i in {0,...,4}
	\draw ({(360/5)*\i}:2) node[style=vertex](\i){};
\foreach \i/\j in {0/1,0/2,0/3,1/2,1/3,2/3,2/4,3/4,4/0,4/1}
	\draw [style=arc] (\i) -- (\j);
\node (T1) at (-2.2,0){$T_1$};
\end{scope}

\begin{scope}[xshift=0cm,scale=0.9,rotate=90]
\foreach \i in {0,...,4}
	\draw ({(360/5)*\i}:2) node[style=vertex](\i){};
\foreach \i/\j in {0/1,0/4,1/2,1/4,2/0,2/4,3/0,3/1,3/2,4/3}
	\draw [style=arc] (\i) -- (\j);
\node (T2) at (-2.2,0){$T_2$};
\end{scope}

\begin{scope}[xshift=7.5cm,scale=0.9,rotate=90]
\foreach \i in {0,...,4}
	\draw ({(360/5)*\i}:2) node[style=vertex](\i){};
\foreach \i in {0,...,4}
	\draw [style=arc] let \n1={int(mod(\i+1,5))}
		in (\i) -- (\n1);
\foreach \i in {0,...,4}
	\draw [style=arc] let \n1={int(mod(\i+2,5))}
		in (\i) -- (\n1);
\node (T3) at (-2.2,0){$T_3$};
\end{scope}

\begin{scope}[xshift=-7.5cm,yshift=-5cm,scale=0.9,rotate=90]
\foreach \i in {0,...,4}
	\draw ({(360/5)*\i}:2) node[style=vertex](\i){};
\foreach \i/\j in {0/1,0/2,0/3,1/2,1/3,1/4,2/3,3/4,4/0,4/2}
	\draw [style=arc] (\i) -- (\j);
\node (T5) at (-2.2,0){$T_5$};
\end{scope}

\begin{scope}[xshift=0cm,yshift=-5cm,scale=0.9,rotate=90]
\foreach \i in {0,...,4}
	\draw ({(360/5)*\i}:2) node[style=vertex](\i){};
\foreach \i/\j in {0/1,0/2,0/3,1/2,1/4,2/3,3/1,3/4,4/0,4/2}
	\draw [style=arc] (\i) -- (\j);
\node (T5) at (-2.2,0){$T_5$};
\end{scope}

\begin{scope}[xshift=7.5cm,yshift=-5cm,scale=0.9]
\foreach \i in {0,...,5}
  \draw ({(360/6)*\i}:2) node[style=vertex](\i){};
\foreach \i/\j in {0/5,1/0,2/0,2/1,2/4,2/5,3/0,3/1,3/2,3/5,4/0,4/1,4/3,4/5,5/1}
  \draw [style=arc] (\i) -- (\j);
\node (T6) at (0,-2.2){$T_6$};
\end{scope}

\end{scope}

\end{tikzpicture}
\caption{The family $\cal{T}$.}
\label{minobsirrtour}
\end{figure}
\end{center}

We conclude by allowing loops to be present or
absent. In a tournament $T$ with possible loops,
we say a set of vertices is {\em acyclic} if in $T$
it contains no directed cycle (other than a loop).

The following lemma can be verified by a lengthy
but straightforward calculation.

\begin{lemma}
\label{lem:irr-gen}
Let $T$ be a tournament obtained from a tournament
in the family $\cal{T}$ by adding loops to an
acyclic set of vertices, and such that the resulting
tournament does not contain $T_0$ (from
Figure \ref{mixed_tournament}) as a subgraph.   Then
$T$ is a minimal obstruction for strong chordality.
\end{lemma}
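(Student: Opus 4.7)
The plan is to prove both parts of ``$T$ is a minimal obstruction'': that $T$ is not strongly chordal, and that every single-vertex deletion $T-v$ is strongly chordal. Since each $T_i\in\mathcal{T}$ has at most six vertices, and since the acyclic subsets of a tournament are precisely the vertex sets of its transitive sub-tournaments, the argument is a finite case analysis driven by the structure of $\mathcal{T}$ together with Lemmas~\ref{lem:simple} and~\ref{lem:peak}.

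For non-strong-chordality, the key observation is that $M(T)$ and $M(T_i)$ differ only in diagonal entries indexed by $L$, so in any given vertex ordering a $\Gamma$ submatrix of $M(T_i)$ can be destroyed in $M(T)$ only if the $0$-entry of that $\Gamma$ sits at a diagonal position $(v,v)$ with $v\in L$. Equivalently, a strong ordering of $T$ would force every $\Gamma$ appearing in the corresponding ordering of $M(T_i)$ (such $\Gamma$s exist in every ordering by Theorem~\ref{thm:irrtouGfree}, since $T_i$ has no symmetric $\Gamma$-free ordering) to arise from a transitive triangle $v_i\to v_j$, $v_i\to v_k$, $v_j\to v_k$ in $T_i$ whose middle vertex $v_j$ lies in $L$. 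The acyclicity of $L$ restricts how many vertices of any directed triangle can lie in $L$, and the $T_0$-free hypothesis rules out the mixed loop pattern on four vertices that would otherwise allow a clean reorder; together they prevent every forced $\Gamma$ from being simultaneously of the restricted shape, leaving a genuine $\Gamma$ behind. Concretely, for each admissible pair $(T_i,L)$ one can apply Lemma~\ref{lem:simple} or Lemma~\ref{lem:peak} (or the observation following Lemma~\ref{lem:peak} that if there is a unique simple vertex which is simultaneously the unique non-peak, then $T$ is not strongly chordal) to show that no vertex of $T$ can legally occupy the first or last position of a strong ordering.

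For minimality, I would show each $T-v$ is strongly chordal by exhibiting a strong ordering. By Theorem~\ref{thm:irrtouGfree}, $T_i-v$ is strongly chordal, so it has a $\Gamma$-free ordering $\sigma_v$; one then verifies that $\sigma_v$, possibly after a small adjustment, remains $\Gamma$-free after the loops of $L\setminus\{v\}$ are reinstated. The $T_0$-free hypothesis on $T$, inherited by $T-v$, is exactly what prevents loop-reinsertion from producing a new obstruction inside $T-v$. The main obstacle is bookkeeping: each individual check is elementary (a single-$\Gamma$ verification or the validation of a fixed ordering), but with six obstructions in $\mathcal{T}$, many subsets $L\subseteq V(T_i)$ to consider (filtered by acyclicity and $T_0$-freeness), and a handful of vertex deletions per case, the total verification is exactly what the statement alludes to as ``a lengthy but straightforward calculation.''
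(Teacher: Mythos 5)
The paper gives no proof of this lemma at all: it is introduced with the single remark that it ``can be verified by a lengthy but straightforward calculation.'' Your proposal is therefore, in spirit, exactly the paper's approach --- a finite case analysis over the admissible pairs $(T_i,L)$ with $T_i\in\cal{T}$ and $L$ an acyclic, $T_0$-compatible loop set --- and you correctly identify the two obligations (that $T$ itself is not strongly chordal, and that every $T-v$ is) and the fact that strong chordality is hereditary, so checking single-vertex deletions suffices. Your observation that a $\Gamma$ of $M(T_i)$ can only be destroyed by a loop when its $0$-entry sits at a diagonal position of a vertex of $L$ is correct and is the right organizing principle for the first half.

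That said, the proposal remains a plan rather than a proof, and since the entire content of the lemma \emph{is} the finite verification, the gap is real even if the paper commits the same omission. Two specific loose ends: first, you assert that Lemma~\ref{lem:simple}, Lemma~\ref{lem:peak}, or the observation following it will dispose of every admissible pair, but Lemma~\ref{lem:peak} only excludes an \emph{irreflexive} peak from the last position --- once loops are added, the tournaments in question have reflexive vertices, and a reflexive peak is not excluded by that lemma; so it is not automatic that these three criteria cover all cases, and some pairs may require exhibiting a surviving $\Gamma$ in every ordering directly. Second, adding loops can also \emph{create} $\Gamma$ submatrices (a new diagonal $1$ can serve as a corner of a $\Gamma$); this only helps the non-membership direction, but it works against you in the minimality direction, where your claim that a $\Gamma$-free ordering of $T_i-v$ survives loop reinsertion ``because $T-v$ is $T_0$-free'' is precisely the assertion that needs checking, not a reason. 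Neither point invalidates the approach, but both mean the ``straightforward calculation'' still has to be carried out before the argument is complete.
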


Lemma \ref{lem:irr-gen} will be used multiple times
in the proof of our following theorem. 

\begin{theorem}
Any strongly chordal tournament $T$ with possible loops
is obtained from $TT_n (i,k)$, for some integers $i, k, n$
with $1 \le i \le n$, $3 \le k$, by adding loops to vertices
in any acyclic subset of vertices, as long as their addition 
does not create a copy of $T_0$ from Figure \ref{mixed_tournament}.
\end{theorem}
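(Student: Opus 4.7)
The plan is to deduce the required structure from three facts: heredity of strong chordality (a symmetric $\Gamma$-free ordering of $M(T)$ restricts to one on any principal submatrix, so every induced subtournament of a strongly chordal tournament is strongly chordal); the irreflexive classification that any irreflexive strongly chordal tournament is some $TT_n(i,k)$ (equivalently, $\mathcal{T}$-free by Theorem~\ref{thm:irrtouGfree}); and Lemma~\ref{lem:irr-gen}, which promotes every tournament in $\mathcal{T}$ to a minimal obstruction once loops are added on an acyclic subset in a way that avoids $T_0$.

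First I would show that the reflexive vertices of $T$ form an acyclic set. Any directed cycle in a tournament contains a directed $3$-cycle, so it suffices to rule out a reflexive directed $3$-cycle; but such a triangle is a reflexive non-transitive tournament, hence not strongly chordal by Theorem~\ref{thm:reftour}, contradicting heredity. Next I would show that $T$ contains no induced $T_0$, which follows immediately from heredity together with the observation (made just after Figure~\ref{mixed_tournament}) that $T_0$ is itself not strongly chordal.

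Now let $T^{-}$ denote the irreflexive tournament obtained from $T$ by deleting every loop. I claim $T^{-}$ is $\mathcal{T}$-free; the theorem then follows, because by Theorem~\ref{thm:irrtouGfree} (and the structural classification of irreflexive strongly chordal tournaments) $T^{-}\cong TT_n(i,k)$ for some $i,k,n$ with $1\le i\le n$ and $3\le k$, and the two observations of the previous paragraph say exactly that the set of reflexive vertices of $T$ is acyclic and that no $T_0$ has been created. To prove the claim, suppose some vertex set $S$ induces in $T^{-}$ a copy of some $T_j\in\mathcal{T}$. Then $T[S]$ has the same underlying arc-set as $T_j$, with loops on some subset $R\subseteq S$ of reflexive vertices. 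By the first step, $R$ is acyclic (as a subset of an acyclic set), and by the second step $T[S]$ contains no $T_0$. Lemma~\ref{lem:irr-gen} then asserts that $T[S]$ is a minimal obstruction to strong chordality, so in particular $T[S]$ is not strongly chordal, contradicting heredity applied to $T$.

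The main obstacle is verifying the hypotheses of Lemma~\ref{lem:irr-gen} in the last step; but precisely because the acyclicity of the reflexive set and the absence of $T_0$ are established globally in $T$ beforehand, they transfer to any induced subgraph $T[S]$ without additional argument, which is what makes the reduction clean.
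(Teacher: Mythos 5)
Your proof is correct and follows essentially the same route as the paper's: both reduce to the irreflexive classification (Theorem~\ref{thm:irrtouGfree} and the $TT_n(i,k)$ structure) and rule out loop-augmented copies of the tournaments in $\cal{T}$ via Lemma~\ref{lem:irr-gen} together with the reflexive directed triangle and $T_0$ obstructions; you merely package the paper's strong/non-strong case split into one argument by establishing the acyclicity of the reflexive set and the absence of $T_0$ globally first. The only difference worth noting is that the paper's proof also verifies the converse (that every tournament of the stated form is in fact strongly chordal), which the literal statement does not demand and you rightly omit.
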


\begin{proof}
It is a simple excercise to verify that the tournament $T_0$
from Figure \ref{mixed_tournament}, is a minimal 
obstruction for strong chordality.

Now, when $T$ is strong, notice that either the same
$\Gamma$-free ordering used for $TT^\ast_n$ in the
irreflexive case, or its reverse, will also work for
this case.  The only case, up to symmetry, where the
order needs to be reversed, is when the vertices are
ordered by decreasing out-degree, and the first
vertex has a loop.   Also, if $n \ge 4$, then it can
never happen that the first and last vertices are
reflexive, otherwise $T$ would contain $T_0$.

Notice that adding a source or a sink to a
$\Gamma$-free tournament will result again in a
$\Gamma$-free tournament, regardless of whether the
new vertex is reflexive or irreflexive.   To obtain
a $\Gamma$-free ordering for the new tournament, it
suffices to add the new vertex at the end of the
previous ordering. Thus, indeed the tournaments
described in the theorem are strongly chordal.

Let $T$ be a strong tournament which is strongly
chordal.   If the underlying irreflexive tournament
$T^\circ$ of $T$ is isomorphic to $TT^\ast_n$, then
$T$ does not contain $T_0$ as an induced subgraph,
and it has the desired form.   Else, by Theorem
\ref{thm:irrtouGfree}, $T^\circ$ contains $T_i$ as a
subtournament, for $i \in \{ 1, \dots, 5 \}$.    If
$T_i$ is also a tournament of $T$, then $T$ is not
strongly chordal, a contradiction.   Thus, $T$
contains a copy of $T_i$ where some vertices are
reflexive.   But this is not possible either,
because the directed reflexive triangle is a minimal
obstruction for strong chordality, as well as $T_0$
and $T_i$ with any acyclic subset of vertices being
reflexive, and not containing $T_0$. Thus, $T$ must
have the structure described in the first item of
the theorem.

Now, if $T$ is non-strong, then every strong
component of $T$ is either a single vertex or
contains a directed triangle.   Since the
reflexive $3$-cycle and each tournament
obtained from $T_6$ by adding loops to an
arbitrary acyclic subset are minimal
obstructions for strong chordality, it follows
that at most one connected component is not a
single vertex.   Hence, the only non-trivial
strong component of $T$ has the structure
described by the first item of this theorem,
and thus, $T$ has the desired structure.
\end{proof}

\section{Conclusions}

We have seen that strongly chordal digraphs can be recognized
in polynomial time amongst symmetric digraphs, and amongst
tournaments with possible loops. We do not know if they can be
recognized in polynomial time in general. We now mention one
other natural class of digraphs with polynomial recognition of
strong chordality.

Each bipartite graph $G$ defines a digraph $D_G$ by orienting all edges 
from red to blue vertices; the adjacency matrix of $D_G$ is clearly obtained 
from the bi-adjacency matrix of $G$ by adding rows and columns of zeros.
Thus independent permutations of rows and columns of $N(G)$ again yield 
a symmetric ordering of $M(D_G)$. This means that $G$ is a chordal bigraph 
if and only if $D_G$ is a strongly chordal digraph.

A {\em balanced digraph} is a digraph $D$ such that any cycle
has the same number of forward and backward arcs. By definition, 
a balanced digraph $D$ is irreflexive, and it is easy to see that there 
is a vertex partition into parts $V_i, i=1, 2, \dots, k$, such that each
arc of $D$ starts in some $V_i$ and ends in $V_{i+1}$. The
adjacency matrix of a balanced digraph has can be symmetrically
permuted into consecutive blocks corresponding to the parts $V_i$.
In such a form, a symmetric permutation of the matrix corresponds
to independent permutations of rows and columns in each submatrix
$M_i$ with rows in block $V_i$ and columns in block $V_{i+1}$.
Moreover, it is easy to see that each $\Gamma$ submatrix of $M$
must lie in some $M_i$. Note that when $k=2$, i.e., when there are
only two parts, $V_1, V_2$, a balanced digraph is some $D_G$ for
a bipartite graph $G$. For a general balanced digraph, denote by
$G_i$ the underlying bipartite subgraph of $D$ with parts $V_i, V_{i+1}$.

\begin{theorem}
A balanced digraph $D$ is strongly chordal if and only if each $G_i$
is a chordal bigraph.
\end{theorem}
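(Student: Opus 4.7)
The forward direction is immediate from the block structure of $M(D)$: if $\pi$ is a symmetric $\Gamma$-free ordering of $M(D)$, then, because every $\Gamma$-submatrix of $M(D)$ is confined to a single block $M_i$ (all other entries being zero), the pair $(\pi|_{V_i},\pi|_{V_{i+1}})$ is a $\Gamma$-free ordering of $M_i$, so each $G_i$ is a chordal bigraph.

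For the converse, my first step is to show that $M(D)$ is totally balanced. Consider the bigraph $B(D)$ from the proof of Proposition~\ref{jing}: the red copy of a vertex in $V_i$ is adjacent only to blue copies of vertices in $V_{i+1}$, and the blue copy of a vertex in $V_i$ is adjacent only to red copies of vertices in $V_{i-1}$, so $B(D)$ decomposes as a vertex-disjoint union of the $G_i$'s together with some isolated vertices. Chordal bipartiteness is closed under disjoint union, whence $B(D)$ is a chordal bigraph and $M(D)=N(B(D))$ is totally balanced. In particular, each block $M_i$ admits a (so far uncoordinated) $\Gamma$-free ordering.

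The second step is to promote these independent $\Gamma$-free orderings to a single symmetric one. I would argue by induction on $|V(D)|$, invoking the standard fact (readable off any doubly lexical ordering of the bi-adjacency matrix) that every chordal bigraph has a simple vertex on each side of its bipartition. Let $v\in V_1$ be a simple vertex of $G_1$ on its $V_1$-side. Since $v$ has no in-neighbours in $D$, and its out-neighbours in $V_2$ have pairwise $\subseteq$-comparable in-neighbourhoods in $V_1$, the vertex $v$ is a simple vertex of the digraph $D$ in the sense of Section~3. Removing $v$ preserves the hypothesis on every $G_i$, so by induction $D-v$ has a symmetric $\Gamma$-free ordering; prepending $v$ produces a candidate ordering of $D$.

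The main obstacle is to verify that this candidate is actually a strong ordering of $D$, not merely a simple one. The strong condition at $v$ boils down to the out-neighbours of $v$ in $V_2$ being arranged in increasing order of their $V_1$-in-neighbourhoods, which are pairwise comparable by the simplicity of $v$ but which need not be so arranged in an arbitrary strong ordering of $D-v$. To secure this I would choose, at each step of the induction, not an arbitrary simple vertex but one that is additionally minimal in a partial order recording in-neighbourhood (and dually out-neighbourhood) inclusion in the remaining subgraph, exactly in the spirit of the $3\Rightarrow 1$ argument in the proof of Theorem~\ref{sym}. Transplanting that partial-order-refined selection to the digraph setting is the technical heart of the proof.
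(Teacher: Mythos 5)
Your forward direction is correct and is exactly the paper's argument: the block structure confines every $\Gamma$ to a single $M_i$, so a symmetric $\Gamma$-free ordering restricts to a $\Gamma$-free ordering of each $M_i$. Your first step of the converse (that $M(D)$ is totally balanced, via the disjoint-union decomposition of $B(D)$) is also correct. The gap is the one you yourself flag at the end, and it is not a technicality that a more careful, partial-order-refined choice of simple vertex will repair: the obstacle is that in a symmetric ordering the linear order on $V_{i+1}$ must serve simultaneously as the \emph{column} order of $M_i$ and the \emph{row} order of $M_{i+1}$, and these two roles can impose incompatible requirements. Concretely, in any $\Gamma$-free arrangement of a matrix the first row must be a simple vertex (for any two of its neighbours, the earlier one's neighbourhood must be contained in the later one's), and by transposing (note $\Gamma^{T}=\Gamma$) the same holds for the first column; hence the first $V_2$-vertex of a symmetric $\Gamma$-free ordering must be simple both as a vertex of $G_1$ and as a vertex of $G_2$.

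This requirement can be unsatisfiable even when every $G_i$ is a tree. Take $V_1=\{a,b,c\}$, $V_2=\{x,y,z,w\}$, $V_3=\{a',b',c'\}$; let $G_1$ be the path $x\,a\,y\,b\,z\,c\,w$ (arcs $a\to x,y$; $b\to y,z$; $c\to z,w$) and let $G_2$ be the path $y\,a'\,x\,b'\,w\,c'\,z$ (arcs $x\to a',b'$; $y\to a'$; $w\to b',c'$; $z\to c'$). Both $G_i$ are chordal bigraphs, but the $V_2$-vertices simple in $G_1$ are exactly the path-ends $x,w$, while those simple in $G_2$ are exactly $y,z$; whichever of $x,y,z,w$ comes first in a putative symmetric ordering yields an explicit $\Gamma$ (for instance, if $y$ is first and $a$ precedes $b$, then rows $a,b$ and columns $y,x$ form a $\Gamma$). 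So this balanced digraph is not strongly chordal although each $G_i$ is a chordal bigraph: the converse direction of the statement fails for $k\ge 3$ (it holds for $k=2$, where there is no coupling, and the paper's preceding remark that the block permutations are ``independent'' overlooks exactly this coupling). Your proposal therefore cannot be completed as written; the ``technical heart'' you defer is the point at which the statement itself breaks down, and any correct version must impose a compatibility condition linking consecutive blocks.
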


We can translate this result to a forbidden subgraph characterization.
A {\em fence} is an oriented even cycle of length greater than four,
without a directed path of length two, see Figure \ref{fff}.

\begin{corollary}
A balanced digraph $D$ is strongly chordal if and only if it does not
contain a fence as an induced subgraph.
\end{corollary}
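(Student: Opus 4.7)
The plan is to combine the preceding theorem with the Golumbic characterization of chordal bigraphs (a bigraph is chordal if and only if it contains no induced even cycle of length greater than four), and then show that induced even cycles of length at least six in the subgraphs $G_i$ correspond precisely to induced fences in $D$.

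By the preceding theorem, $D$ is strongly chordal if and only if every $G_i$ is a chordal bigraph, equivalently (by Golumbic), if and only if no $G_i$ contains an induced even cycle of length at least $6$. So it suffices to prove that $D$ contains an induced fence if and only if some $G_i$ contains an induced even cycle of length at least $6$.

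For one direction, suppose $G_i$ contains an induced even cycle $C$ of length $2k \geq 6$ with vertices alternating between $V_i$ and $V_{i+1}$. In $D$ every arc incident to $V_i \cup V_{i+1}$ that lies in $G_i$ points from $V_i$ to $V_{i+1}$, so along $C$ the arcs alternate direction: each vertex in $V_i$ has both incident cycle-arcs going out, and each vertex in $V_{i+1}$ has both incident cycle-arcs coming in. In particular no two consecutive arcs form a directed path of length $2$, so $C$ (viewed in $D$) is a fence. It remains induced in $D$ because the balanced structure forbids arcs within $V_i$ or within $V_{i+1}$, and $C$ is induced in $G_i$.

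For the converse, let $F$ be an induced fence in $D$. At every vertex of $F$, the two incident arcs of $F$ must not form a directed path of length $2$, so they are either both outgoing or both incoming. Call the former sources and the latter sinks; sources and sinks alternate along $F$. Fix a source $u$; its two out-neighbours in $F$ lie in the part $V_{i+1}$ that follows $u$'s part $V_i$, so both of $u$'s neighbours in $F$ lie in $V_{i+1}$. Each such sink in $V_{i+1}$ has both of its in-neighbours in $V_i$, and continuing around $F$ shows that all sources of $F$ lie in $V_i$ and all sinks in $V_{i+1}$ for one fixed $i$. Therefore $F$ is contained in $G_i$; its length is even and at least $6$, and induced-ness in $D$ transfers to induced-ness in $G_i$, again because balancedness forbids edges within $V_i$ or within $V_{i+1}$. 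This establishes the desired equivalence.

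The only delicate step is the second direction, where one must verify that the ``no directed path of length two'' condition, combined with balancedness, forces all sources of the fence into a single part $V_i$ and all sinks into $V_{i+1}$; this is what confines the fence to a single $G_i$ and makes the reduction to Golumbic's theorem work.
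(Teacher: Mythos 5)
Your proposal is correct and follows exactly the route the paper intends (the paper states the corollary as a direct translation of the preceding theorem via Golumbic's characterization of chordal bigraphs, leaving the fence-to-cycle correspondence implicit). Your careful verification that a fence must live entirely within a single $G_i$ — because its sources and sinks are forced into consecutive parts $V_i$, $V_{i+1}$ — is precisely the detail the paper omits, and you handle it correctly.
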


\begin{figure}[hh]
\begin{center}
\includegraphics[height=1.5cm]{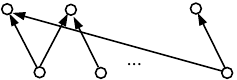}
\caption{A fence} \label{fff}
\end{center}
\end{figure}

\begin{corollary}
Each oriented tree is strongly chordal.
\end{corollary}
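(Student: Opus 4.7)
The plan is to deduce this immediately from the preceding corollary characterizing strongly chordal balanced digraphs as those with no induced fence. So the proof has two short steps.

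First, I would observe that any oriented tree $T$ is a balanced digraph: the defining condition requires that every cycle have the same number of forward and backward arcs, and an oriented tree has no cycles at all (in the underlying undirected sense), so the condition is vacuously satisfied. One can also directly exhibit the required vertex partition $V_1, V_2, \dots, V_k$ by rooting the tree and placing a vertex in $V_i$ according to the signed depth (incrementing for forward arcs, decrementing for backward arcs) along the unique path from the root; consistency across the tree follows from the absence of cycles.

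Second, I would apply the preceding corollary: an oriented tree, having no cycles whatsoever, in particular contains no induced oriented even cycle, and hence no induced fence. Therefore the corollary gives that $T$ is strongly chordal.

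The argument is essentially immediate once the previous corollary is in hand; there is no real obstacle. The only thing one might wish to say more carefully is why an oriented tree fits the balanced framework (to invoke the corollary), and this is handled by the vacuous-cycles observation above.
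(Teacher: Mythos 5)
Your proposal is correct and matches the route the paper intends: the corollary is stated without proof precisely because an oriented tree is vacuously balanced (no cycles in the underlying graph) and contains no induced fence, so the preceding fence characterization (equivalently, the theorem that a balanced digraph is strongly chordal iff each $G_i$ is a chordal bigraph, with each $G_i$ here being a forest) applies immediately. Your extra care in exhibiting the level partition $V_1,\dots,V_k$ via signed depth is a reasonable way to justify that trees fit the balanced framework.
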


\vspace{2mm}

To close the paper, we note that problems for graphs with possible loops are often very natural
and combine the reflexive and irreflexive versions in interesting ways. Consider for example the
problems of domination and total domination. The {\em domination number} of a graph $G$ is the
minimum number of vertices in a {\em dominating set} $D$, i.e., a set such that each vertex is in 
$D$ or has a neighbour in $D$. The {\em total domination number} of a graph $G$ is the minimum 
number of vertices in a {\em total dominating set} $D$, i.e., a set such that each vertex has a 
neighbour in $D$. This suggests that the former deals with graphs that are reflexive, as each
vertex dominates itself (as if it had a loop), while the latter deals with graphs that are irreflexive,
no vertex is adjacent to itself, so it cannot dominate itself. We can more generally define the {\em 
general domination number of a graph $G$ with possible loops} to be minimum number of vertices 
in a set $D$ such that each vertex has a neighbour in $D$. If $G$ is reflexive, the general domination
number coincides with the usual domination number, and if $G$ is irreflexive, it coincides with the total 
domination number. For a general graph with possible loops, this new problem represents and interesting 
mixture of the two classical problems.

Farber \cite{domdom} gave a linear time algorithm to find a minimum dominating set in a reflexive strongly 
chordal graph, and Damaschke, Mueller, and Kratsch \cite{damdom} gave a linear time 
algorithm for a minimum total dominating set in a bipartite chordal graph. We now describe a linear time
algorithm for the problem to find a minimum general dominating set in a strongly chordal graph with 
possible loops, which generalizes and extends both the algorithms in \cite{domdom,damdom}. It is, in 
fact, a very small modification of the algorithm of Farber \cite{domdom}, underlining the fact of just how 
natural the new class of strongly chordal graphs with possible loops is.

As in \cite{domdom}, we consider the minimum general dominating set problem together with its dual, the maximum
number of vertices with disjoint neighbourhoods. While in Farber's case, the neighbourhoods were all closed
neighbourhoods, for us they are open neighbourhoods, which may or may not include the vertex itself, depending
on whether or not it has a loop. Clearly a vertex can be dominated only by a vertex from its neighbourhood, so vertices
with disjoint neighbourhoods need to be dominated by distinct vertices. Therefore, the maximum number of vertices 
with disjoint neighbourhoods is a lower bound for the minimum number of vertices in a dominating set. Moreover, if 
we find a dominating set $D$ and a set $C$ of vertices with disjoint neighbourhoods such that $|C|=|D|$, then $D$ 
is minimum and $C$ is maximum.

These sets will be computed iteratively in the order of a strong ordering $<$ of a graph $G$ with possible loops.
We repeat the following steps, each assigning to some vertices of $G$ labels $C, D$, and $N$. (The label $N$
signals that the label $C$ is no longer available for that vertex.) We assume that $G$ has no isolated vertices, as 
such vertices can always be dealt with separately. Initially, no vertices are labeled.

\begin{itemize}
\item
Find, in the ordering $<$, the first vertex $x$ without the label $N$.
\item
Find, in the ordering $<$, the last neighbour $y$ of $x$.
\item
Label $x$ by $C$, label $y$ by $D$, and label all neighbours of $y$ by $N$.
\end{itemize}

Note that a vertex will in general receive several labels. Every vertex will receive at least the label $C$ or $N$.
Moreover, every vertex will receive the label $C$ and $D$ at most once. Specifically, when a vertex $x$ is labeled 
$C$, a unique neighbour $y$ of $v$ is labeled $D$. At that point, all neighbours of $y$ receive the label $N$, 
including $x$. Therefore $x$ will not receive the label $C$ again, and $y$ will never be receiving another label
$D$ (since all its neighbours are ineligible for label $C$). So, if we ignore the auxiliary labels $N$, we will have
some $k$ vertices labeled $C$ and the same number $k$ vertices labeled $D$. (Some vertices may have both
labels $C$ and $D$.) In other words, we have sets $C$ and $D$ (of vertices with those labels) that have the same
cardinality. The final set $D$ is dominating, as there are no vertices left without a label $C$ or $N$, and each vertex
with label $C$ or $N$ has a neighbour labeled $D$. We now prove that the neighbourhoods of vertices labeled $C$ 
are disjoint. Otherwise some $x < x'$ both labeled $C$ have a common neighbour $z$; suppose $y$ was the last 
neighbour of $x$ when $x$ was labeled $C$. Since $y$ is the last neighbour of $x$, we have $z < y$. Since $x'$ is
labeled $C$ later than $x$, it is not a neighbour of $y$. Therefore we have $x < x', z < y$, and $xz \in E(G), xy \in E(G)$, 
and $x'z \in E(G), x'y \notin E(G)$, which contradicts the fact that $<$ is a strong ordering. Thus we have a general 
dominating set $D$ and a set of vertices $C$ with disjoint neighbourhoods, and $|C|=|D|$. Therefore both are optimal. 

We have given a linear time algorithm solving the general domination problem (and its dual) if a strong ordering of the
graph with possible loops is given. We expect that the algorithm for weighted domination in strongly chordal graphs 
\cite{domdom} also allows a similar extension to weighted general domination in strongly chordal graphs with possible 
loops.

\end{document}